\documentclass[11pt]{amsart}

\usepackage{amsmath}	
\usepackage{amssymb}	
\usepackage{amsthm}
\usepackage[utf8]{inputenc}
\usepackage[dvips]{graphicx}
\usepackage{pgfplots}
\usepackage{url}
\usepackage{mathtools}
\usepackage[english]{babel}
\usepackage{multirow,tabu}
\usepackage{float}
\usepackage{placeins}
\usepackage{amsfonts}
\usepackage{verbatim}
\usepackage[T1]{fontenc} 	
\usepackage{csquotes}
\usepackage{amscd}
\usepackage{enumerate}
\usepackage{enumitem}
\usepackage{calc}

\topmargin=-0.2cm \textheight=24cm \textwidth=16cm \hoffset=-1.6cm
\vfuzz2pt 
\hfuzz2pt 
\newtheorem{thm}{Theorem}[section]
\newtheorem{cor}[thm]{Corollary}
\newtheorem{lem}[thm]{Lemma}

\theoremstyle{definition}

\newtheorem{rem}[thm]{Remark}

\numberwithin{equation}{section}

\newcommand{\norm}[1]{\left\Vert#1\right\Vert}
\newcommand{\abs}[1]{\left\vert#1\right\vert}

\newcommand{\B}{\mathcal{B}}

\newcommand*\HH{\mathcal{H}} 

\newcommand{\field}[1]{\mathbb{#1}}

\newcommand{\C}{\field{C}}

\newcommand{\Z}{\field{Z}}
\newcommand{\D}{\field{D}}

\newcommand*\conj[1]{\overline{#1}}
\newcommand*\closed[1]{\overline{#1}}


\let\latexchi\chi
\makeatletter
\renewcommand\chi{\@ifnextchar_\sub@chi\latexchi}
\newcommand{\sub@chi}[2]{
  \@ifnextchar^{\subsup@chi{#2}}{\latexchi^{}_{#2}}%
}
\newcommand{\subsup@chi}[3]{
  \latexchi_{#1}^{#3}%
}
\makeatother

\DeclareMathOperator{\id}{id}
\DeclareMathOperator{\spa}{span}

\begin{document}

\title[Unified approach to spectral properties of multipliers]{Unified approach to spectral properties  of multipliers}

\author[Lindstr{\"o}m] {Mikael Lindstr\"om}\address{Mikael
Lindstr{\"o}m. Department of Mathematics,  \AA bo Akademi
University. FI-20500 \AA  bo, Finland. \emph{e}.mail:
mikael.lindstrom@abo.fi}
\author[Miihkinen]{Santeri Miihkinen}\address{Santeri Miihkinen.
Department of Mathematics,  Abo Akademi University. FI-20500 \AA  bo,
Finland. \emph{e}.mail: santeri.miihkinen@abo.fi}
\author[Norrbo] {David Norrbo}
\address{David Norrbo. Department of Mathematics, \AA bo Akademi University. FI-20500 \AA bo, Finland. \emph{e}.mail:  	dnorrbo@abo.fi }

\subjclass[2010]{Primary 47B35; 47B38}




\keywords{Spectrum, Essential spectrum, Hardy-Sobolev spaces, Bergman-Sobolev spaces, Multiplication operator.}

\begin{abstract} 
Let  $\mathbb B_n$ be the open unit  ball  in $\mathbb C^n$. We characterize the spectra of pointwise multipliers $M_u$ acting on Banach spaces of analytic functions on $\mathbb B_n$ satisfying some general conditions. These spaces include Bergman-Sobolev  spaces $A^p_{\alpha,\beta}$, Bloch-type spaces $\mathcal B_\alpha$, weighted Hardy spaces $H^p_w$ with Muckenhoupt weights and Hardy-Sobolev Hilbert spaces $H^2_\beta$.  Moreover, we describe the essential spectra of multipliers in most of the aforementioned spaces, in particular, in those spaces for which the set of multipliers is a subset of the ball algebra.

\end{abstract}

\maketitle

\section{Introduction and preliminaries}

 In a very recent article  \cite{CHZ}, Cao, He, and Zhu considered the multiplication operator $M_u$ acting on the Hardy-Sobolev Hilbert space and characterized the spectrum and essential spectrum of $M_u.$ In the present work, we extend and generalize the results obtained there from Hardy-Sobolev Hilbert space to the Bergman-Sobolev and Bloch-type spaces of the open unit ball $\mathbb{B}_n$ of $\C^n$ and weighted Hardy spaces of the open unit disk $\D$ with Muckenhoupt weights. In particular, our main focus is to allow the multiplier space $M(X(\mathbb B_n))$ to be contained in the ball algebra, which holds for example for certain Bergman-Sobolev spaces and Bloch-type spaces. We formulate our results on spectral properties of $M_u$ acting on a Banach space $X(\mathbb{B}_n)$ of analytic functions in $\mathbb{B}_n$, where $X(\mathbb{B}_n)$ satisfies very general and natural properties regarding its multiplier space and the norm topology. Consequently, we approach the spectral properties of multipliers in a unified manner and  
key examples of such spaces include the aforementioned spaces. Aside from obtaining a description of the spectrum for all spaces satisfying the mentioned properties, we also have to develop some new techniques to determine the essential spectrum of $M_u$ regarding the non-Hilbert space case. Other previous work regarding spectral and related properties of multiplication operators on analytic function spaces includes \cite{A}, \cite{BB1}, \cite{BDL}, \cite{CH}, \cite{FX}, \cite{OF} and \cite{V}.

The article is organised as follows. In section 2, we introduce general Banach spaces $X(\mathbb B_n)$ of analytic functions on $\mathbb B_n$ and give central concrete examples of them. Section 3 focuses on the  spectrum of $M_u$ by first establishing a characterization of invertibility of $M_u$ and then obtaining  the spectrum of $M_u$ and giving admissible examples of spaces on which $M_u$ can be defined. In section 4, we begin with a characterization of the essential spectrum in the high-dimensional case $n > 1$. Then we consider the case $n=1$ by first establishing a characterization of the Fredholmness of $M_u$ when conditions \textbf{(I)}, \textbf{(IV)} and $M(X(\D))= H^\infty(\D)$ hold. Examples of spaces satisfying the previous conditions are also given. Next, we consider the difficult case when $M(X(\D)) \subset A(\D)$ (or $u \in M(X(\D)) \cap A(\D)$) and starting off with the space $X(\D)=\mathcal{B}_\alpha (\D)$ for $0 < \alpha \le 1$ and showing that the condition, earlier observed to be sufficient for the Fredholmness of $M_u$, is also necessary. Finally, we show the necessity of the condition in the case of  those Bergman-Sobolev spaces $A^p_{\alpha,\beta}(\D)$ for which $M(A^p_{\alpha,\beta}(\D)) \subset A(\D).$ From these two cases we obtain the essential spectrum of $M_u$ for several scales of spaces $\mathcal{B}_\alpha(\D)$ and $A^p_{\alpha,\beta}(\D)$ as the main result of Section 4.

To conclude, our main result regarding the spectra of multiplication operators acting on $X(\mathbb B_n)$ is Theorem \ref{MainThmSpec}. The essential spectra of operators $M_u$ acting on certain spaces $X(\D)$ having their multiplier spaces $M(X(\D))$ contained in the disk algebra are described in Theorem \ref{MainResEssSpec}. In the case of general $X(\D)$ with $M(X(\D)) = H^\infty(\D)$, the essential spectra of operators $M_u$ are characterized in Theorem \ref{EssSpecMlutiHInfty}. In Theorem \ref{EssSpecForMultiDim}, we present the high-dimensional case $n>1$ concerning the essential spectra of operators $M_u$ acting on general spaces $X(\mathbb B_n)$.

Now we introduce some definitions and notations. Throughout this article, let $\mathbb Z_{\geq a} = \{n\in\mathbb Z : n\geq a\}$ and $\mathbb Z_{>a}=\{n\in\mathbb Z : n> a\}$, where $a\in\mathbb R$. Furthermore, let  $\mathbb B_n = \{z\in\mathbb{C}^n : \abs{z}<1 \}$, $n\in \mathbb Z_{\geq 1}$, be the open unit  ball  in $\mathbb C^n$ and $\D = \mathbb B_1$. Moreover,  let $\HH(\mathbb B_n)$ be the space  of all analytic  functions $f\colon \mathbb B_n\to \mathbb{C}$ and ${\mathcal P}(\mathbb B_n)$ be the set of all analytic polynomials  $p\colon\mathbb B_n\to \mathbb C$ such that $p(z)=\sum_{k \in J} c_k z^k$, where $J \subset \Z^n_{\geq 0}$ is a finite set, $k = (k_1,\ldots, k_n) \in \Z^n_{\geq 0}, \, |k| = k_1+\ldots+k_n$, $z^k = z_1^{k_1}\cdot \ldots \cdot z_n^{k_n}$ and  $c_k\in \mathbb C$ for  $ k \in J$. 

We also recall that a bounded linear operator $T$ acting on a Banach space is Fredholm if it has  closed range and both kernel and cokernel of $T$ are finite dimensional. The essential spectrum $\sigma_e(T)$ of an operator $T$ is defined as $\sigma_e(T)=\{\lambda \in \C: T-\lambda I \textup{ is not Fredholm}\},$ where $I$ is the identity operator, and the reader may observe that $\sigma_e(T)$ is a subset of the spectrum $\sigma(T)$. See \cite{AA} for more details on Fredholm properties of bounded operators.

For any  $f\in \HH (\mathbb B_n),$ the gradient of $f$  is given by  
\[
\nabla f(z) = \left( \frac{\partial f}{\partial {z_1}},...,  \frac{\partial f}{\partial {z_n}}\right)
\]
and will be denoted $Df(z)$ in the case $n=1$.

Let $\beta\in\mathbb{R}$ and $f\in \HH(\mathbb B_n)$. The fractional radial derivative $R^\beta$ is given by
\[
R^\beta f (z) = \sum_{k=1}^\infty k^\beta f_k(z),
\]
where  $f(z)=\sum_{k=0}^\infty f_k(z)$  is the homogeneous expansion of $f\in \HH(\mathbb B_n)$. Let $I\colon \HH(\mathbb B_n) \to  \HH(\mathbb B_n) $ be the identity operator. The operator $(I+R)^\beta$ will also be used and is naturally defined by
\[
(I+R)^\beta f (z) = \sum_{k=0}^\infty (1+k)^\beta f_k(z).
\]

\smallskip

For expressing asymptotic behaviour, the notation $a_k\sim b_k$, as $k\to\infty$, means $\lim_{k\to\infty} \frac{a_k}{b_k}=1$. Moreover, by $a(x)\gtrsim b(x)$ (or $a(x)\lesssim b(x)$) we indicate the existence of a constant $C>0$ independent of $x$ such that $a(x)\geq C b(x) $ (or $a(x)\leq C b(x)$) for all $x$ in some implicit set. If both $a(x)\gtrsim b(x)$ and $a(x)\lesssim b(x)$ hold, we write $a(x)\asymp b(x)$. When two Banach spaces $X_1$ and $X_2$ are isomorphic, we use the notation $X_1 \simeq  X_2$.

\bigskip

\section{Conditions and Examples}

We deal with a vector space $X(\mathbb B_n)$ of analytic functions on $\mathbb B_n$ and a norm $\|\cdot\|_X$ on it, that renders $X(\mathbb B_n)$ a Banach space. 
As usual, for each $z\in \mathbb B_n,$ the evaluation functional $\delta_z$ is defined by $\delta_z(f) = f(z)$ for all $f \in X(\mathbb B_n)$. We assume that  $X(\mathbb B_n)$ contains the constant functions, so then all $\delta_z$ are non-zero.  Furthermore, we associate to $X(\mathbb B_n)$ another Banach space $Y(\mathbb B_n)\subset \HH(\mathbb B_n)$  containing the constant functions and equipped with the norm $\norm{\cdot}_Y$ as will be explained  below. 

\bigskip

The Banach spaces $X(\mathbb B_n)$  and  $Y(\mathbb B_n)$  are often assumed  to satisfy the first three conditions below:

\begin{enumerate}[label=(\textbf{\Roman*})]

\item\label{finerThanCOTop} \emph{The topologies induced by $\norm{\cdot}_X$ and $\norm{\cdot}_Y$ are both finer than the compact-open topology $\tau_0.$  }
\end{enumerate}

\smallskip
\noindent In particular,  for every $z\in \mathbb B_n$,
$\delta_z$ is a bounded linear  functional on both  $X(\mathbb B_n)$ and  $Y(\mathbb B_n)$.

\smallskip

 Let 
\[
M(X(\mathbb B_n))=\left\{    u\in \HH(\mathbb B_n) :  uf \in X(\mathbb B_n)   \  \text{for all}     \  f\in X(\mathbb B_n)  \right\}.
\]
Using condition \ref{finerThanCOTop} and the closed graph theorem, it follows that  every $u\in M(X(\mathbb B_n))$ induces a bounded linear operator $M_u: X(\mathbb B_n)\to X(\mathbb B_n).$

\smallskip

\begin{enumerate}[resume, label=(\textbf{\Roman*})]
\item \label{RelationXtoY} \emph{For  some $N\in \mathbb Z_{\geq 1}$ it holds that $\norm{f}_X \asymp \abs{f(0)}+\norm{R^N f}_Y$ for all $f\in \HH(\mathbb B_n)$.}
\end{enumerate}

Condition \ref{RelationXtoY} describes a relationship between the Banach spaces $X(\mathbb B_n)$ and $Y(\mathbb B_n)$ such that Lemma \ref{MuInvert} holds. Since the lemma is trivial for spaces $X(\mathbb B_n)$ with $ M(X(\mathbb B_n))=H^\infty (\mathbb B_n)$, this condition may be omitted when such spaces are considered. For these spaces we have $Y(\mathbb B_n)=X(\mathbb B_n)$.

\smallskip
\begin{enumerate}[resume, label=(\textbf{\Roman*})]
\item \label{HinftySubsetMultiplY} \emph{$H^\infty(\mathbb B_n)\subset M(Y(\mathbb B_n))$.}
\end{enumerate}

\smallskip

By condition \ref{finerThanCOTop} it is well-known that $\sup_{z\in\mathbb B_n}|u(z)|\le ||M_u||$ for all $u\in M(X(\mathbb B_n))$, so   
$M(X(\mathbb B_n))\subset H^\infty(\mathbb B_n) \ \ \text{ and } \ \ M(Y(\mathbb B_n)) = H^\infty(\mathbb B_n) ,$ where also  condition \ref{HinftySubsetMultiplY} is used in the second
statement. Since $u\mapsto M_u $ is bounded according to the bounded inverse theorem, it follows from the boundedness of $M_u$, that there exists a constant $C>0$ such that $\|M_ug\|_Y\le  C\|u\|_\infty  ||g||_Y$ for all $g\in Y(\mathbb B_n)$ and $u\in M(Y(\mathbb B_n))$.

\smallskip

When considering the case $n=1$, we will need the following condition to determine the essential spectra of the multiplication operator generated by $u\in M(X(\D))$.
\smallskip
\begin{enumerate}[resume, label=(\textbf{\Roman*})]
\item \label{divideByPolynom} \emph{If $f\in X(\mathbb D)$ has a zero  at $z_0\in \mathbb D$, then $\frac{f(z)}{z- z_0}\in X(\mathbb D).$}
\end{enumerate}
\smallskip

\begin{lem}\label{dividingWithPolynomial}
Let $f\in \mathcal H (\mathbb D )$ and $v \colon \mathbb D \to [0,\infty)$ be a bounded function such that $v(z)=v(|z|)$ for all $z\in\mathbb D$. Moreover, let $N\in\mathbb Z_{\geq 0}$ be such that 
\[
\sup_{z\in\mathbb{D}} v(z) |D^N f(z)| <\infty.
\]
If $z_0\in \mathbb D$ is a zero of $f$, then
\[
\sup_{z\in\mathbb{D}}v(z) \left|D^N\frac{f(z)}{z-z_0}\right|<\infty.
\]

\end{lem}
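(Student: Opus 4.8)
The plan is to set $g(z)=f(z)/(z-z_0)$, which is analytic on $\mathbb D$ since $z_0$ is a zero of the analytic function $f$, and then to split the disk into the compact part $\{|z|\le\rho\}$ and the outer annulus $\{\rho<|z|<1\}$, where $\rho=(1+|z_0|)/2<1$. On the compact part the conclusion is immediate: $D^N g$ is continuous on $\overline{\mathbb D(0,\rho)}\subset\mathbb D$, hence bounded there, and $v$ is bounded, so $v\,|D^N g|$ is bounded on $\{|z|\le\rho\}$. All the work is therefore on the annulus, where the advantage is that $|z-z_0|\ge|z|-|z_0|>\rho-|z_0|=(1-|z_0|)/2=:\delta>0$, so the singular factor $1/(z-z_0)$ is harmless.

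The key algebraic step is to express $D^N g$ using only $D^N f$, and not the lower derivatives of $f$, which the hypothesis does not control. Starting from $f(z_0)=0$ I would write $f(z)=(z-z_0)\int_0^1 f'(z_0+t(z-z_0))\,dt$, so that $g(z)=\int_0^1 f'(z_0+t(z-z_0))\,dt$. Differentiating under the integral sign $N-1$ times gives $D^{N-1}g(z)=\int_0^1 t^{N-1}(D^N f)(z_0+t(z-z_0))\,dt$, while the Leibniz rule applied to $f=(z-z_0)g$ gives $D^N f=(z-z_0)D^N g+N\,D^{N-1}g$. Combining these two identities yields, for $N\ge1$,
\[
D^N g(z)=\frac{1}{z-z_0}\left(D^N f(z)-N\int_0^1 t^{N-1}(D^N f)(z_0+t(z-z_0))\,dt\right),
\]
the integral term being absent when $N=0$. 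Thus every term on the right involves only $D^N f$, evaluated at $z$ and at the interpolation points $w_t:=z_0+t(z-z_0)$, $t\in[0,1]$.

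The heart of the proof, and the step I expect to be the main obstacle, is to upgrade the pointwise bound $v\,|D^N f|\le M:=\sup_{\mathbb D}v\,|D^N f|$ into a bound that is uniform over the whole segment from $z_0$ to $z$, even though $v$ is only assumed bounded and radial, not monotone. The resolution combines the maximum modulus principle with the radiality of $v$: on the annulus $|z|\ge|z_0|$, so each $w_t$ satisfies $|w_t|\le(1-t)|z_0|+t|z|\le|z|$ and lies in the closed disk of radius $|z|$; hence $|D^N f(w_t)|\le\max_{|\zeta|=|z|}|D^N f(\zeta)|$ by the maximum principle for the analytic function $D^N f$. Since $v$ is radial, $v(\zeta)=v(z)$ for $|\zeta|=|z|$, so this maximum is at most $M/v(z)$ (if $v(z)=0$ the desired bound is trivial, as $D^N g(z)$ is finite). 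Feeding $|D^N f(w_t)|\le M/v(z)$ into the displayed formula and using $N\int_0^1 t^{N-1}\,dt=1$ gives $v(z)\,|D^N g(z)|\le 2M/|z-z_0|\le 2M/\delta$ on the annulus, which together with the compact part establishes $\sup_{\mathbb D}v\,|D^N g|<\infty$. The only routine points remaining are the justification of differentiation under the integral sign, which follows from local uniform convergence of the analytic integrand, and the elementary Leibniz identity.
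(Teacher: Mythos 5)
Your proof is correct, and it takes a genuinely different route from the paper's. Both arguments share the same skeleton: reduce to a neighbourhood of the boundary where $|z-z_0|$ is bounded below, and exploit radiality of $v$ to convert the hypothesis into control of $D^N f$ over whole circles $|\zeta|=|z|$. The difference lies in the algebraic reduction. The paper expands $D^N\bigl(f\cdot(z-z_0)^{-1}\bigr)$ by the Leibniz rule, which produces all the lower-order derivatives $D^jf$, $j=0,\dots,N$, and then controls each of them by $\sup_{|y|=|z|}|D^Nf(y)|$ plus the Taylor coefficients $|D^jf(0)|$, via an induction based on integrating $D^{k+1}f$ along the radius from $0$ to $z$. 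You instead derive the identity
\[
D^N g(z)=\frac{1}{z-z_0}\Bigl(D^N f(z)-N\int_0^1 t^{N-1}(D^N f)(z_0+t(z-z_0))\,dt\Bigr),
\]
which involves only the top derivative $D^Nf$, evaluated at $z$ and on the segment from $z_0$ to $z$; since on the annulus every point of that segment lies in the disk $\{|\zeta|\le|z|\}$, the maximum modulus principle applied to $D^Nf$ together with radiality of $v$ finishes in one step. Your route buys a cleaner, fully explicit bound ($v\,|D^Ng|\le 2M/\delta$ on the annulus, with no origin terms) at the cost of two extra, though standard, tools (differentiation under the integral sign and the maximum principle); the paper's route is more pedestrian, using only radial integration and induction, but drags along the lower-order derivatives and the constants $|D^jf(0)|$, which are then absorbed by the boundedness of $v$. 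One cosmetic remark: your case distinction $v(z)=0$ is unnecessary, since radiality gives directly $v(z)\max_{|\zeta|=|z|}|D^Nf(\zeta)|=\max_{|\zeta|=|z|}v(\zeta)\,|D^Nf(\zeta)|\le M$.
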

\begin{proof}
Let $g(z)=\frac{f(z)}{z-z_0}$. Since $f$ is analytic with a zero at $z_0$ we have $D^N g \in \mathcal H (\mathbb D)$. Thus, $h(z) = v(z) |D^N g(z)|$ is bounded on $\mathbb D$ if and only if $h$ is bounded near the boundary. For $z\in T=\left\{ z\in\mathbb D:|z|>\frac{1+|z_0|}{2}\right\}$, we have the following estimate

\begin{equation}
\label{GBlochUpperApprox}
|D^N g (z)| = \left|\sum_{j=0}^N \binom{N}{j} D^j f (z) D^{N-j}(z-z_0)^{-1}\right| \leq \sum_{j=0}^N \frac{ N!|D^j f(z)| }{  (|z|-|z_0|)^{N-j+1}   }.
\end{equation}
Furthermore, for $k\geq 0$ we have

\begin{align*}
|D^{k} f (z)| &\leq \left| \int_{C_z} D^{k+1} f(w) dw\right| + |D^k f(0)| \\
&\leq |z| \sup_{|w|=|z|}| D^{k+1} f(w)| + |D^k f(0) |\\
&\leq \sup_{|w|=|z|}| D^{k+1} f(w)| + |D^k f(0) |,
\end{align*}
where $C_z$ is the line from $0$ to $z$ in $\mathbb D$. 

By induction, it can be shown that
\[
|D^{k} f (z)| \leq \sup_{|y|=|z|}  | D^{N} f(y)| + \sum_{j=0}^{N-k-1}|D^{k+j}f(0) | 
\]
for $0\leq k \leq N$. Moreover, from the fact that $\sup_{z\in\mathbb D}\sup_{|w|=|z|}$ is interchangable with $\sup_{w\in\mathbb D}$ and $v(z) = v(|z|)$ we now obtain

\begin{align*}
 v(z) |D^{k} f (z)| &\leq \sup_{|y|=|z|} v(z) | D^{N} f(y)| + v(z)\sum_{j=0}^{N-k-1}|D^{k+j}f(0) |\\
&= \sup_{|y|=|z|} v(y) | D^{N} f(y)| + v(z)\sum_{j=0}^{N-k-1}|D^{k+j}f(0) |\\
&\leq \sup_{z\in \mathbb D}  v(z)| D^{N} f(z)| + \sup_{z\in \mathbb D}  v(z) \sum_{j=0}^{N-1}|D^{j}f(0) |= M_{f,N,v}<\infty
\end{align*}
for all $z\in\mathbb D$. Especially for $z\in T$, using (\ref{GBlochUpperApprox}), we have

\begin{align*}
 v(z)|D^N g (z)|&\leq \sum_{k=0}^N \frac{N!  v(z) |D^k f(z)| }{   (|z|-|z_0|)^{N-k+1}   }\leq  M_{f,N,v}\sum_{k=0}^N \frac{N!2^{N-k+1}   }{   (1-|z_0|)^{N-k+1}   }<\infty,
\end{align*}
which proves the lemma.

\end{proof}

Next, we list a number of spaces satisfying the above conditions \ref{finerThanCOTop}-\ref{divideByPolynom}. However, in part \textbf{(c)} we consider 
spaces $X(\mathbb B_n)$ for which $M(X(\mathbb B_n))=H^\infty(\mathbb B_n)$, implying that condition \ref{RelationXtoY} is irrelevant. 

\smallskip

{\bf  Examples.} $\textbf{(a)}$ For  $\alpha>0$  the Bloch-type space $X(\mathbb B_n)=\mathcal B _\alpha(\mathbb B_n)$ is the space of all  $f\in \HH (\mathbb B_n)$ satisfying
$\norm{f}_{\mathcal B_ \alpha}=|f(0)| + \sup_{z\in\mathbb B_n}(1-\abs{z}^2)^\alpha \abs{\nabla f(z)} < \infty$, see \cite{Z}. To these spaces correspond
\[
Y(\mathbb B_n)= H_{\alpha}^\infty(\mathbb B_n)=\{f\in\mathcal H(\mathbb B_n) : \norm{f}_{H^\infty_\alpha}=\sup_{z\in\mathbb B_n} (1-\abs{z}^2)^\alpha \abs{f(z)}<\infty\},
\]
see \cite{BS}. The little  Bloch-type space 
$\mathcal B _{0,\alpha}(\mathbb B_n)$  is the subspace of $\mathcal B _\alpha(\mathbb B_n)$ satisfying
$\lim_{|z|\to1} (1-\abs{z}^2)^\alpha \abs{\nabla f(z)}=0$. It is well-known that these spaces obey \ref{finerThanCOTop}. Let $\norm{f}_{\B R_\alpha}=|f(0)| + \sup_{z\in\mathbb B_n}(1-\abs{z}^2)^\alpha \abs{Rf(z)}$. According to Theorem 7.1 in \cite{Z}, it holds that 
\[
\{f\in \mathcal H (\mathbb B_n):\norm{f}_{\B_\alpha}<\infty\}=\{f\in \mathcal H (\mathbb B_n):\norm{f}_{\B R_\alpha}<\infty\}.
\]
Therefore it follows from the bounded inverse theorem that $\norm{\cdot}_{\mathcal B _\alpha}\asymp \norm{\cdot}_{\B R_\alpha}$ and, hence, both of these spaces satisfy condition \ref{RelationXtoY}. Condition \ref{HinftySubsetMultiplY} holds by definition. We will consider the space $(\B_\alpha(\mathbb D),\norm{\cdot}_{\mathcal B _\alpha})$ in the one-dimensional case.

By  Theorem~{2.1~(i)} in \cite{OSZ}, for $0 < \alpha < 1$, 
$u \in M(\B_\alpha(\D))  \text{  if and only if} \  u \in \B_\alpha(\D) \cap H^\infty(\D)=  \B_\alpha(\D)\subset A(\D)$, where the inclusion is found in  Theorem 7.9 in \cite{Z}.
When $\alpha = 1$, we get from     Theorem~{2.1~(ii)} in \cite{OSZ} that  $u \in M(\B_\alpha(\D))$   if and only if 
\[
\sup_{z\in\D} \abs{u'(z)}(1 - \abs{z}^2)\log\left(\frac{e}{1 - \abs{z}^2}\right) < \infty 
	\quad \text{and} \quad  u\in  H^\infty(\D).
\] 
Therefore $u\in \B_{0,1}(\D).$ Finally by    Theorem~{2.1~(iii)} in \cite{OSZ} we have for $\alpha > 1$ that  
$u \in M(\B_\alpha(\D))  \text{ if and only if}  \   u \in \B_1(\D) \cap H^\infty(\D)=  H^\infty(\D).$ According to Lemma \ref{dividingWithPolynomial} a function belonging to $\B_\alpha(\D)$ will remain in $\B_\alpha(\D)$ after removing a finite number of zeros $z_0$ through division by $z-z_0$, which proves \ref{divideByPolynom}. The notations $\B(\D)$ and $\B_0(\D)$ stand for $\B_1(\D)$ and $\B_{0,1}(\D)$ respectively.
\\

\smallskip

$\textbf{(b)}$ Let $\beta\ge 0$, $\alpha\ge -1$  and $1\leq p <\infty$. The holomorphic Sobolev space $A_{\alpha,\beta}^p(\mathbb B_n)$  is defined by
\[
A_{\alpha,\beta}^p(\mathbb B_n) = \{  f\in\HH(\mathbb B_n) : \norm{f}_{A_{\alpha,\beta}^p}  <\infty \},
\]
where the norm is defined by
\[
\norm{f}_{A_{\alpha,\beta}^p} =\norm{(I+R)^\beta f}_{A_\alpha^p} =\left( \int_{\mathbb B_n} |(I+R)^\beta f(z)|^p  dA_\alpha(z)  \right)^\frac{1}{p}
\]
for $\alpha>-1$ and
\[
\norm{f}_{A_{-1,\beta}^p} =\norm{(I+R)^\beta f}_{H^p} =\left( \int_{\partial\mathbb B_n} |(I+R)^\beta f(z)|^p  dS(z)  \right)^\frac{1}{p}.
\]
Furthermore, $dA_\alpha(z)=  \frac{\Gamma(n+\alpha+1)}{n! \Gamma(\alpha+1)} (1-\abs{z}^2)^\alpha dA(z)$, where $dA(z)$ is the $2n$-dimensional Lebesgue measure normalized so that $\int_{\mathbb B_n} dA(z) = 1$, and hence, $\int_{\mathbb B_n} dA_\alpha(z) = 1$ for every $\alpha>-1$. The notation $dS(z)$ stands for the surface measure satisfying $\int_{\partial \mathbb B _n} dS(z)=1$. 
The holomorphic Sobolev spaces can be partitioned into the Bergman-Sobolev spaces, $\alpha>-1$, and the Hardy-Sobolev spaces, $H^p_\beta(\mathbb B_n) =A^p_{-1,\beta}(\mathbb B_n) .$ In case of $\beta=0$, these spaces are called the weighted Bergman spaces $A^p_\alpha(\mathbb B_n) = A^p_{\alpha,0}(\mathbb B_n)$ with $\alpha > -1$ and the Hardy spaces $H^p (\mathbb B_n) = A^p_{-1,0}(\mathbb B_n)$.

For $p\ge 1$, $\alpha_j > -1$, $\beta_j \ge 0$ $(j=1,2)$, with $\alpha_1- \alpha_2 = p(\beta_1 -\beta_2)$, the following equivalence holds by Theorem 5.12 in \cite{BB} (see also \cite{CKS}):
\begin{equation} \label{formimp}
A_{\alpha_1,\beta_1}^p(\mathbb B_n)   \simeq A_{\alpha_2,\beta_2}^p(\mathbb B_n),
\end{equation}
where the isomorphism is given by the identity operator, and hence, the spaces have equivalent norms. By the same theorem, one also obtains the statement (\ref{formimp}) for $\alpha_1=-1$ and $p=2$. From this it follows that for $\beta_1 < \frac{1+\alpha_1}{p}$, where equality may be used in the case of $p=2$, we have $A_{\alpha_1,\beta_1}^p(\mathbb B_n)   \simeq A_{\alpha_1-\beta_1 p,0}^p(\mathbb B_n)$. The right-hand side is a weighted Bergman space or $H^2(\mathbb B_n)$, hence, $M(A_{\alpha,\beta}^p(\mathbb B_n)) = H^\infty (\mathbb{B}_n)$ for $\beta<\frac{1+\alpha}{p}$, where equality may be used in the case of $p=2$. Regarding the case $n=1$, if $\beta > \frac{2+\alpha}{p}$, then $A_{\alpha,\beta}^p(\D)$ is an algebra and $M(A_{\alpha,\beta}^p(\D))= A_{\alpha,\beta}^p(\D)$, see \cite{BB1}. In this setting, there is a $b<\beta$ satisfying $0 < b-\frac{2+\alpha}{p} <1$, so that 
\[
A_{\alpha,\beta}^p(\D) \subset A_{\alpha,b}^p(\D) \subset \Lambda_{ b-\frac{2+\alpha}{p}}(\D) \subset A(\D),
\] 
where $\Lambda_{ b-\frac{2+\alpha}{p}}$ is a Lipschitz space, see \cite{Z}. The first inclusion follows from (\ref{formimp}), the second inclusion can be found in Theorem 5.5 in \cite{BB} and the last one is given by Theorem 7.9 in \cite{Z}. Furthermore,  by Proposition 2.2 in \cite{CKS}, we have for $p\ge 1$, $\alpha\ge -1,$ and every positive integer $N$ that
\begin{equation}\label{good}
\norm{f}_{A_{\alpha,N}^p} \asymp \sum_{j=0}^{N-1} |D^jf(0)| + \norm{D^Nf}_{A_\alpha ^p}
\end{equation}
for $f\in\HH(\mathbb D)$. Next, we check the conditions \ref{finerThanCOTop} - \ref{divideByPolynom}.

The topology generated by $\norm{\cdot}_{A^p_{\alpha,\beta}}$ is finer than the compact-open topology $\tau_0$, so condition \ref{finerThanCOTop} holds. Indeed, the statement follows from Lemma 5.6 in \cite{BB} with the use of supremum over an arbitrary compact subset of $\mathbb B_n$. Hereafter, we will assume that $\beta\geq \frac{1+\alpha}{p}$. For smaller $\beta$ it was mentioned that the multiplier space is $H^\infty(\mathbb B_n)$ which is considered in \textbf{Examples} \textbf{(c)}, where the the space $A_{\alpha,\beta}^p(\mathbb B_n)$ can be viewed as a weighted Bergman space.

 In the case $N>\beta -\frac{\alpha+1}{p}\geq 0$, an application of  (\ref{formimp})  gives that 
\[
f\in A_{\alpha,\beta}^p(\mathbb B_n)  \ \text{if and only if} \   f\in A_{(N-\beta)p+\alpha,N}^p(\mathbb B_n) \ \text{if and only if} \  R^Nf\in  A_{(N-\beta)p+\alpha}^p(\mathbb B_n).
\]
Therefore, let $X(\mathbb B_n) =A_{\alpha,\beta}^p(\mathbb B_n)$ and $Y(\mathbb B_n) =A_{(N-\beta)p+\alpha}^p(\mathbb B_n)$, where $N=\inf\{\hat{N}\in\mathbb Z_{\geq 1}:\hat{N}>\beta -\frac{\alpha+\frac{1}{2}}{p}\}$. Moreover, for $f\in \HH (\mathbb B_n)$ we have
\begin{equation}\label{BergSobNormEquivRel}
\norm{(I+R)^\beta f}_{A_\alpha^p} \asymp \abs{f(0)}+\norm{R^\beta f}_{A_\alpha^p},
\end{equation}
according to Lemma \ref{BergSobEquivNorm}. Condition \ref{RelationXtoY} follows by first using the equivalence of the norms $\norm{\cdot}_{A_{\alpha,\beta}^p}$ and $\norm{\cdot}_{A_{(N-\beta)p+\alpha,N}^p}$ by (\ref{formimp}), and then applying (\ref{BergSobNormEquivRel}) to the latter norm. 
Furthermore, it holds that $M(A^p_\alpha(\mathbb B_n)) = H^\infty(\mathbb B_n)$, which shows that condition \ref{HinftySubsetMultiplY} is satisfied.

Let us check the condition \ref{divideByPolynom} for $A_{\alpha,\beta}^p(\D)$. We assume that $f \in A_{\alpha,\beta}^p(\D)$ has a zero at $z=z_0$. Let us show that $\frac{f}{z-z_0} \in  A_{\alpha,\beta}^p(\D)$  by establishing that $R^N\left(\frac{f}{z-z_0}\right)  \in A^p_{(N-\beta)p + \alpha}(\D)$.   
Let us take $|z_0| < r < 1.$ We may assume that $|z| \ge r,$ since $R^N\left(\frac{f}{z-z_0}\right) \in H(\D)$ is bounded on $r\D.$ We will utilize the following formula given in Proposition 6 in \cite{CHZ}:
\begin{align*}
R^N\left(\frac{f(z)}{z-z_0}\right) = \frac{(-1)^N}{(z-z_0)^{N+1}} \sum_{k=0}^N (-1)^k \binom{N+1}{k} (z-z_0)^k R^N((z-z_0)^{N-k}f),
\end{align*}
where $r \le |z| < 1$. It suffices to show that $R^N((z-z_0)f) \in A^p_{(N-\beta)p + \alpha}(\D)$, which implies that $R^N((z-z_0)^{N-k}f) \in A^p_{(N-\beta)p + \alpha}(\D)$ for $k=0,1,\ldots,N$. Using the general Leibniz rule we obtain 
\begin{align*}
R^N((z-z_0)f) = \sum_{k=0}^N \binom{N}{k} R^{N-k}(z-z_0)R^k(f) =  \sum_{k=0}^{N} \binom{N}{k} z R^k(f).
\end{align*}
We observe that $\|zR^kf\|_{A^p_{(N-\beta)p + \alpha}} \le \|R^kf\|_{A^p_{(N-\beta)p + \alpha}}$
and $R^kf \in A^p_{(N-\beta)p + \alpha}(\D)$ if and only if 
\[
(1-|z|^2)^{N-k}R^{(N-k)}R^kf = (1-|z|^2)^{N-k}R^{N}f \in A^p_{(N-\beta)p + \alpha}(\D),
\] 
see \cite[p.~75]{Z}. The last statement holds, since 
\[
\norm{(1-|z|^2)^{N-k}R^{N}f }_{A^p_{(N-\beta)p + \alpha}} \le \norm{R^{N}f}_{A^p_{(N-\beta)p + \alpha}} < \infty,
\] 
where we used the fact $R^Nf  \in A^p_{(N-\beta)p + \alpha}(\D).$ So we have that $R^kf \in A^p_{(N-\beta)p + \alpha}(\D)$ for $k = 0,1,\ldots,N$ and consequently  $R^N((z-z_0)f) \in A^p_{(N-\beta)p + \alpha}(\D)$. Therefore $R^N\left(\frac{f}{z-z_0}\right)  \in A^p_{(N-\beta)p + \alpha}(\D).$ 

It should also be mentioned that the spaces $A^p_{\alpha,\beta}(\mathbb B_n)$ are reflexive for $p>1$, see Proposition 5.7 (iv) in \cite{BB}.

\smallskip

$\textbf{(c)}$ 
We consider all spaces $X(\mathbb B_n)$  that satisfies \ref{finerThanCOTop} and $M(X(\mathbb B_n))=H^\infty(\mathbb B_n)$. Furthermore, condition \ref{divideByPolynom} is also assumed to hold if $n=1$. Letting $Y(\mathbb B_n)=X(\mathbb B_n)$ condition \ref{HinftySubsetMultiplY} is also satisfied and condition \ref{RelationXtoY} is irrelevant, see the remark after condition \ref{RelationXtoY}. These spaces include growth spaces $H^\infty_\alpha(\D), \, \alpha  >0,$ and weighted Hardy spaces $H_w^p(\mathbb D),\ p >1$, where $w\in (A^p)$, that is, $w$ satisfies the Muckenhoupt $(A^p)$-condition, see details in \cite{BGZ}. Considering the weighted Hardy spaces, condition  \ref{finerThanCOTop} follows from the proof of Lemma 2.1 in \cite{BGZ}. Notice that if $w\in (A^p), \ p>1$, then the critical exponent $q_w<p$. For $f\in\mathcal H (\mathbb D)$ we have $\norm{f}_{H_w^p}<\infty$ if and only if 
\[
\lim_{r\to1^-} \int_{-\pi}^\pi \left| f(re^{i\theta}) \right|^p w(\theta) d\theta   <\infty.
\]
Since, for every $z_0\in\mathbb D$, there exists $r<1$ such that $\frac{1}{z-z_0}$ is bounded on $\mathbb D\setminus r\mathbb D$, condition \ref{divideByPolynom} follows. Condition \ref{divideByPolynom} is proved by similar arguments for many spaces, for example, weighted Bergman spaces, growth spaces and Hardy spaces.

\begin{lem}\label{BergSobEquivNorm}
Let $\beta\geq 0$. If either $\alpha> -1$ and $p\geq 1$, or $\alpha= -1$ and $p=2$, it holds that

\[
\norm{f}_{A^p_{\alpha,\beta}} \asymp \abs{f(0)}+\norm{R^\beta f}_{A^p_\alpha} , \ f\in\mathcal H (\mathbb B_n).
\]
Moreover, the space $A^p_{\alpha,\beta}(\mathbb B_n)$ endowed with the norm defined as $\norm{f}_{p,\alpha,\beta}=\abs{f(0)}+\norm{R^\beta f}_{A^p_\alpha}$ is a Banach space.
\end{lem}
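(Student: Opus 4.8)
The plan is to reduce the stated equivalence to the boundedness on $A^p_\alpha(\mathbb B_n)$ of a single radial coefficient multiplier together with its inverse, and then to read off the two inequalities. Writing $f=\sum_{k\ge 0}f_k$ for the homogeneous expansion, both $(I+R)^\beta$ and $R^\beta$ act diagonally on the homogeneous parts, multiplying $f_k$ by $(1+k)^\beta$ and by $k^\beta$ (with the $k=0$ part annihilated) respectively. Hence, setting $c_k=\bigl((1+k)/k\bigr)^\beta$ for $k\ge 1$ and letting $T$ be the radial multiplier $\sum_{k\ge 1}g_k\mapsto\sum_{k\ge 1}c_k g_k$, a direct check on homogeneous expansions (using $c_k k^\beta=(1+k)^\beta$ and that $R^\beta f$ has vanishing constant term) gives
\[
(I+R)^\beta f = f(0) + T\bigl(R^\beta f\bigr).
\]
The inverse multiplier $T^{-1}$ has symbol $c_k^{-1}=\bigl(k/(1+k)\bigr)^\beta$, and correspondingly $R^\beta f = T^{-1}\bigl((I+R)^\beta f - f(0)\bigr)$.

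Next I would record two elementary facts. The constant function satisfies $\norm{1}_{A^p_\alpha}=1$, since $dA_\alpha$ and $dS$ are normalized; and by the mean value property together with Hölder's inequality every $g\in A^p_\alpha(\mathbb B_n)$ obeys $\abs{g(0)}\le\norm{g}_{A^p_\alpha}$. Applying the latter to $g=(I+R)^\beta f$, whose value at the origin is $f(0)$, yields $\abs{f(0)}\le\norm{(I+R)^\beta f}_{A^p_\alpha}=\norm{f}_{A^p_{\alpha,\beta}}$. Granting that $T$ and $T^{-1}$ are bounded on $A^p_\alpha$, the two displayed identities then give the equivalence immediately: the first yields $\norm{f}_{A^p_{\alpha,\beta}}\le\abs{f(0)}+\norm{T}\,\norm{R^\beta f}_{A^p_\alpha}$, while the second, combined with the point-evaluation bound, yields $\abs{f(0)}+\norm{R^\beta f}_{A^p_\alpha}\lesssim\norm{f}_{A^p_{\alpha,\beta}}$.

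The main obstacle is thus the boundedness of $T$ and $T^{-1}$ on $A^p_\alpha(\mathbb B_n)$. For $p=2$ (which covers both the Hardy case $\alpha=-1$ and $\alpha>-1$) this is immediate: the monomials form an orthogonal basis of $A^2_\alpha$ and $H^2$, so $T,T^{-1}$ are diagonal with symbols satisfying $1\le c_k\le 2^\beta$ and $2^{-\beta}\le c_k^{-1}\le 1$, whence $\norm{T},\norm{T^{-1}}<\infty$. For $\alpha>-1$ and general $p\ge 1$ the diagonalization is unavailable, and here I would invoke the theory of radial coefficient multipliers on weighted Bergman spaces: the sequence $\{c_k\}_{k\ge 1}$ is the restriction to the integers of $\phi(x)=(1+1/x)^\beta$, which is smooth and slowly varying on $[1,\infty)$ in the sense that $x^j\phi^{(j)}(x)$ is bounded for every $j\ge 0$, and the same holds for $1/\phi$. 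Concretely, $R^\beta$ and $(I+R)^\beta$ differ from the standard invertible operators $R^{s,t}$ of \cite{Z}, whose $A^p_\alpha$-mapping properties are known, only by such a radial multiplier whose symbol tends to a nonzero constant. Establishing this boundedness for $p\neq 2$ is the only genuinely non-formal step.

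Finally, for the ``moreover'' assertion I would transfer completeness. By construction $(I+R)^\beta$ is a bijection of $\mathcal H(\mathbb B_n)$ with inverse $(I+R)^{-\beta}$, and by the definition of $\norm{\cdot}_{A^p_{\alpha,\beta}}$ it is an isometric isomorphism of $\bigl(A^p_{\alpha,\beta}(\mathbb B_n),\norm{\cdot}_{A^p_{\alpha,\beta}}\bigr)$ onto the complete space $A^p_\alpha(\mathbb B_n)$; hence $A^p_{\alpha,\beta}$ is complete under $\norm{\cdot}_{A^p_{\alpha,\beta}}$. Since the first part shows $\norm{\cdot}_{p,\alpha,\beta}\asymp\norm{\cdot}_{A^p_{\alpha,\beta}}$, the two norms are equivalent and induce the same topology, so $A^p_{\alpha,\beta}(\mathbb B_n)$ is a Banach space under $\norm{\cdot}_{p,\alpha,\beta}$ as well.
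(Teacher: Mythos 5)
Your structural reduction is correct and genuinely different from the paper's: you work directly at fractional order via the exact identity $(I+R)^\beta f = f(0)+T(R^\beta f)$, where $T$ is the coefficient multiplier with symbol $c_k=((1+k)/k)^\beta$, whereas the paper first passes to an integer order $N$ using the isomorphism (\ref{formimp}) together with Lemma 1 of \cite{CHZ}, handles the integer case by expanding $(I+R)^N$ binomially with Jensen's inequality, and imports completeness from \cite{ZZ}. Your $p=2$ case (diagonality of $T$ in the orthogonal monomial basis), the bound $|f(0)|\le\norm{f}_{A^p_{\alpha,\beta}}$, and the transfer of completeness through the isometric bijection $(I+R)^\beta$ are all sound. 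The gap is exactly the step you flag: for $\alpha>-1$ and $p\neq 2$, the boundedness of $T$ and $T^{-1}$ on $A^p_\alpha(\mathbb B_n)$ \emph{is} the entire analytic content of the lemma, and the route you sketch does not close it. Comparing with Zhu's operators $R^{s,t}$ is circular: the discrepancy between $(I+R)^\beta$ (or $R^\beta$) and $R^{s,\beta}$ is again a radial multiplier whose symbol is smooth, slowly varying, and tends to a nonzero constant -- precisely the class whose boundedness is in question (note that $T$ itself already has symbol tending to $1$, so nothing is gained by the comparison). Moreover, since the lemma allows $p=1$, any appeal to a Mikhlin--H\"ormander-type multiplier theorem would need a citation valid for $p=1$ and for the ball; an unspecific invocation of ``the theory of radial coefficient multipliers'' leaves this open.

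The gap can, however, be closed elementarily inside your framework. For $\beta\geq 0$ write $c_k=(1+1/k)^\beta=\sum_{m\geq 0}\binom{\beta}{m}k^{-m}$, where $\sum_{m\geq 0}\absb{\binom{\beta}{m}}<\infty$ (the series terminates for integer $\beta$, and $\absb{\binom{\beta}{m}}\asymp m^{-1-\beta}$ otherwise). The multiplier with symbol $k^{-1}$, acting on functions with $g(0)=0$, is $R^{-1}g(z)=\int_0^1 g(tz)\,t^{-1}\,dt$, and it is a contraction on $A^p_\alpha(\mathbb B_n)$: by Minkowski's integral inequality it suffices to know that $\norm{g(t\,\cdot)}_{A^p_\alpha}\le t\norm{g}_{A^p_\alpha}$, which follows because the integral means $M_p(r,g)=\bigl(\int_{\partial\mathbb B_n}|g(r\zeta)|^p\,dS(\zeta)\bigr)^{1/p}$ satisfy $M_p(tr,g)\le t\,M_p(r,g)$ when $g(0)=0$ (for fixed $r<1$ the function $w\mapsto\int_{\partial\mathbb B_n}|g(wr\zeta)/w|^p\,dS(\zeta)$ is subharmonic and rotation-invariant in $w\in\mathbb D$, hence nondecreasing in $|w|$). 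Therefore $T=\sum_{m\geq 0}\binom{\beta}{m}(R^{-1})^m$ converges absolutely in operator norm on the subspace $\{g\in A^p_\alpha : g(0)=0\}$, which is exactly where you apply it; the same argument with the contraction $(I+R)^{-1}g(z)=\int_0^1 g(tz)\,dt$ and the expansion $(k/(1+k))^\beta=\sum_{m\geq 0}(-1)^m\binom{\beta}{m}(k+1)^{-m}$ gives $T^{-1}$. With this substitution your proof is complete for all $p\geq 1$, $\alpha>-1$, and it is then arguably more self-contained than the paper's argument, which rests on the nontrivial external results (\ref{formimp}) and Lemma 1 of \cite{CHZ}.
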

\begin{proof}
Let $N$ be the smallest integer in the set $\mathbb Z_{> \beta+1}$ and $\gamma=p(N-\beta)+\alpha>-1$.   By (\ref{formimp}) and Lemma 1 in \cite{CHZ} we have $\norm{f}_{A^p_{\alpha,\beta}} \asymp\norm{f}_{A^p_{\gamma,N}} $ and
$\norm{f}_{p,\alpha, \beta} \asymp \norm{f}_{p, \gamma, N}$
 respectively. The norm equivalences are also well known to experts in the case $\alpha=-1$ and $p = 2$.  Therefore 
\[
\left(A^p_{\alpha,\beta}(\mathbb B_n),   \norm{\cdot}_{p,\alpha,\beta}   \right)
\] 
is a Banach space, since this is true for 
\[
\left(A^p_{\alpha,\beta}(\mathbb B_n), \norm{\cdot}_{p,\gamma',N}\right)
\]
for all $\gamma'>-1$,
see \cite{ZZ}. 
It now suffices to show that $\norm{f}_{A^p_{\gamma,N}}  \asymp  \norm{f}_{p, \gamma, N}$. Using Jensen's inequality we have
\begin{align*}
\left|(I+R)^N f(z)\right| &=\left|\sum_{j=0}^N \binom{N}{j} R^j f(z)\right|\\
& \leq2^N\sum_{j=0}^N \frac{1}{2^N}\binom{N}{j} \left|R^j f(z)\right|\\
& \leq 2^N \left(\sum_{j=0}^N \frac{1}{2^N}\binom{N}{j} \left|R^j f(z)\right|^p\right)^\frac{1}{p}.
\end{align*}
Furthermore,
{
\allowdisplaybreaks
\begin{align*}
\norm{f}_{A^p_{\gamma, N}}&=\left(\int_{\mathbb B _n}\left|(I+R)^N f(z)\right|^p dA_\gamma (z)\right)^\frac{1}{p}\\
&\leq \left( 2^{N(p-1)} \sum_{j=0}^N \binom{N}{j}\int_{\mathbb B _n} \left|R^j f(z)\right|^p dA_\gamma(z)\right)^\frac{1}{p}\\
&\asymp   \left(\sum_{j=0}^N\int_{\mathbb B _n} \left|R^N f(z)\right|^p dA_{p(N-j)+\gamma}(z)\right)^\frac{1}{p}\\
&\lesssim \left(  \int_{\mathbb B _n} \left|R^{N} f(z)\right|^p   dA_\gamma(z)\right)^\frac{1}{p} +\abs{f(0)} =  \norm{f}_{p, \gamma, N}. 
\end{align*}
}Our approach to prove the converse is very similar. It holds that
{
\allowdisplaybreaks
\begin{align*}
\int_{\mathbb B _n}\left|R^N f(z)\right|^p dA_\gamma (z)&=\int_{\mathbb B _n}\left|(I+R-I)^N f(z)\right|^p dA_\gamma (z)\\
&\leq 2^{N(p-1)} \sum_{j=0}^N \binom{N}{j}\int_{\mathbb B_n} \left|(I+R)^j f(z)\right|^p dA_\gamma(z)\\
&\asymp  \sum_{j=0}^N \int_{\mathbb B _n} \left|(I+R)^{N} f(z)\right|^p   dA_{p(N-j)+\gamma}(z)\\
&\lesssim \int_{\mathbb B _n} \left|(I+R)^N f(z)\right|^p   dA_\gamma(z).
\end{align*}
}From this and Lemma 5.6 in \cite{BB}, it follows that there exists a constant $M=M(n,N,p,\gamma)>0$ such that
\begin{align*}
\norm{R^Nf}_{A^p_\gamma} + \abs{f(0)} &\leq M\norm{(I+R)^Nf}_{A^p_\gamma}+\abs{f(0)}\leq 2M\norm{(I+R)^Nf}_{A^p_\gamma},
\end{align*}
which finishes the proof.
\end{proof}

\section{The spectrum of $M_u$}

\bigskip

Next, we will characterize the spectra of multiplication operators acting on $X(\mathbb B_n)$ in the case that there exists a space $Y(\mathbb B_n)$ such that conditions \ref{finerThanCOTop} - \ref{HinftySubsetMultiplY} are satisfied.
Condition \ref{RelationXtoY} is crucial in the following lemma. The corresponding results for the Hardy-Sobolev Hilbert spaces were obtained in \cite{CHZ}.
\begin{lem}\label{MuInvert} Assume that \emph{\ref{finerThanCOTop}}, \emph{\ref{RelationXtoY}} and \emph{\ref{HinftySubsetMultiplY}} are satisfied and let  $u\in M(X(\mathbb B_n))$. The following statements are equivalent:
 \smallskip
\begin{enumerate}[label=\upshape(\alph*)]
\itemsep-0.6em 
\item $\frac{1}{u}\in M(X(\mathbb B_n))$,\label{reciprocal_uMX}\\
\item $\frac{1}{u}\in H^\infty \mathbb (\mathbb B_n)$,\label{reciprocal_uHinfty}\\
\item $M_u$ is invertible.\label{muInvert}
\end{enumerate}

\end{lem}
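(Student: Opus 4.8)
The plan is to close the cycle with two soft implications and to isolate \ref{reciprocal_uHinfty}$\Rightarrow$\ref{reciprocal_uMX} as the only substantial point. For \ref{reciprocal_uMX}$\Rightarrow$\ref{muInvert}: if $1/u\in M(X(\mathbb B_n))$, then $M_{1/u}$ is bounded and $M_uM_{1/u}=M_{1/u}M_u=I$, so $M_u$ is invertible. For \ref{muInvert}$\Rightarrow$\ref{reciprocal_uMX} and \ref{muInvert}$\Rightarrow$\ref{reciprocal_uHinfty}: since $X(\mathbb B_n)$ contains the constants and, by \ref{finerThanCOTop}, each $\delta_z$ is bounded and nonzero, surjectivity of $M_u$ forces $u$ to be zero-free (otherwise the range would lie in the proper closed subspace $\ker\delta_{z_0}$), so $1/u\in\mathcal H(\mathbb B_n)$; then $M_u^{-1}f=f/u$ for every $f$, that is $M_u^{-1}=M_{1/u}$, which gives $1/u\in M(X(\mathbb B_n))$, and $\|1/u\|_\infty\le\|M_{1/u}\|=\|M_u^{-1}\|<\infty$ via the bound $\sup_{z}|w(z)|\le\|M_w\|$, which gives \ref{reciprocal_uHinfty}. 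Finally \ref{reciprocal_uMX}$\Rightarrow$\ref{reciprocal_uHinfty} is immediate from $M(X(\mathbb B_n))\subset H^\infty(\mathbb B_n)$.

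It remains to prove \ref{reciprocal_uHinfty}$\Rightarrow$\ref{reciprocal_uMX}. Assume $1/u\in H^\infty(\mathbb B_n)$, so $|u|\ge\delta>0$ on $\mathbb B_n$. I would show $1/u\in M(X(\mathbb B_n))$, i.e. $(1/u)f\in X(\mathbb B_n)$ for every $f\in X(\mathbb B_n)$; by \ref{RelationXtoY} this reduces to $R^N\!\left((1/u)f\right)\in Y(\mathbb B_n)$, the value at the origin being harmless. Since $R=\sum_{j}z_j\partial_{z_j}$ is a derivation, $R^N$ satisfies the Leibniz rule
\[
R^N\!\left(\tfrac1u\,f\right)=\sum_{k=0}^N\binom{N}{k}\,R^{N-k}\!\left(\tfrac1u\right) R^k f .
\]
The top term $k=N$ equals $(1/u)R^Nf$, which lies in $Y(\mathbb B_n)$ by \ref{HinftySubsetMultiplY}, since $1/u\in H^\infty(\mathbb B_n)=M(Y(\mathbb B_n))$ and $R^Nf\in Y(\mathbb B_n)$.

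The hard part is the remaining cross terms $R^{N-k}(1/u)\,R^kf$ with $k<N$: here $R^{N-k}(1/u)$ is a positive-order radial derivative of $1/u$ and need not be bounded, so \ref{HinftySubsetMultiplY} cannot be applied term by term. The plan is to extract the missing control from the full multiplier hypothesis $u\in M(X(\mathbb B_n))$ rather than from $u\in H^\infty(\mathbb B_n)$ alone: applying \ref{RelationXtoY} to $u\phi\in X(\mathbb B_n)$ gives, for every $\phi\in X(\mathbb B_n)$,
\[
\sum_{k=0}^{N-1}\binom{N}{k}R^{N-k}u\,R^k\phi=R^N(u\phi)-u\,R^N\phi\in Y(\mathbb B_n),
\]
which is exactly the weighted bound on the derivatives of $u$ that being a multiplier encodes. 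I would then differentiate $u\cdot(1/u)=1$ to trade derivatives of $1/u$ for those of $u$, via $R^m(1/u)=-(1/u)\sum_{\ell=1}^m\binom{m}{\ell}R^\ell u\,R^{m-\ell}(1/u)$, and combine this with the regularity fact that $R^kf\in Y(\mathbb B_n)$ for all $0\le k\le N$ when $f\in X(\mathbb B_n)$ (the lower-order radial derivatives being recovered from $R^Nf$ by integration, which is available for the spaces $Y(\mathbb B_n)$ under consideration), closing the estimate by induction on $N$. I expect this cross-term estimate to be the main obstacle: the individual products are not governed by $H^\infty$-data, and the argument must use that $\|M_u\|$ controls precisely the weighted growth of the derivatives of $u$ that compensates the growth of the functions $R^kf$. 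Once $1/u\in M(X(\mathbb B_n))$ is secured, $M_{1/u}$ is the inverse of $M_u$ and both \ref{muInvert} and \ref{reciprocal_uHinfty} follow.
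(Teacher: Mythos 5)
Your treatment of the easy implications is correct --- indeed your argument for \ref{muInvert}$\Rightarrow$\ref{reciprocal_uMX}, using the constants and the point evaluations to rule out zeros of $u$, fills in a detail the paper leaves implicit. The genuine gap is in \ref{reciprocal_uHinfty}$\Rightarrow$\ref{reciprocal_uMX}, which is the core of the lemma, and your sketch names the obstacle without overcoming it. Expanding $R^N\bigl((1/u)f\bigr)$ by the Leibniz rule produces the cross terms $R^{N-k}(1/u)\,R^k f$ with $k<N$, and neither factor is controlled by the hypotheses: $R^{N-k}(1/u)$ need not be bounded, and --- more fundamentally --- nothing in conditions \ref{finerThanCOTop}--\ref{HinftySubsetMultiplY} guarantees that $R^k f\in Y(\mathbb B_n)$ for $k<N$. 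Condition \ref{RelationXtoY} ties $\norm{f}_X$ to $\abs{f(0)}+\norm{R^Nf}_Y$ for the single exponent $N$ only; your appeal to recovering lower-order radial derivatives ``by integration'' invokes structure of particular spaces $Y$, which is not available here (the lemma must hold for every pair $X,Y$ satisfying the abstract conditions). Likewise, the identity $\sum_{k=0}^{N-1}\binom{N}{k}R^{N-k}u\,R^k\phi=R^N(u\phi)-u\,R^N\phi\in Y(\mathbb B_n)$ controls only the sum, never the individual products, so the proposed induction that trades derivatives of $1/u$ for derivatives of $u$ has no way to get started.

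The paper's proof sidesteps all of this with an algebraic identity (Corollary 5 in \cite{CHZ}, transferred from $D$ to $R$): for zero-free $u$,
\[
R^N\left(\frac{f}{u}\right)=\frac{(-1)^N}{u^{N+1}}\sum_{k=0}^N(-1)^k\binom{N+1}{k}u^k\,R^N\left(u^{N-k}f\right).
\]
The whole point of this formula is that no derivative of $u$ or of $1/u$ ever appears in isolation: each term is a product of powers of $u$ and $1/u$ (bounded, hence multipliers of $Y(\mathbb B_n)$ by \ref{HinftySubsetMultiplY}) with $R^N$ applied to $u^{N-k}f\in X(\mathbb B_n)$ (which lies in $Y(\mathbb B_n)$ by \ref{RelationXtoY}, using that $u\in M(X(\mathbb B_n))$). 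This is precisely the ingredient your expansion lacks; with it one gets
$\norm{f/u}_X\lesssim\sum_{k=0}^N\norm{u^{N-k}f}_X+\abs{f(0)/u(0)}<\infty$, and \ref{reciprocal_uMX} follows.
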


\begin{proof}
Assuming $\frac{1}{u}\in M(X(\mathbb B_n))$ we obtain immediately, by the remark after condition  \ref{HinftySubsetMultiplY}, that $\frac{1}{u}\in H^\infty (\mathbb B_n)$. To prove the converse implication we will use the formula
\begin{equation}\label{RadialDerivOfQuotient}
R^N \left( \frac{f}{u} \right) = \frac{  (-1)^N   }{   u^{N+1}  } \sum_{k=0}^N (-1)^k \binom{N+1}{k} u^k  R^N \left( u^{N-k} f  \right), \ f\in \HH(\mathbb B_n),
\end{equation}
which can  be found in Corollary 5 in \cite{CHZ}. 
The proof of the formula uses the derivative $D$, but the formula remains valid for all linear operators $S$ that admit the law $S(fg) = fSg+gSf, \ f,g\in\mathcal H (\mathbb D),$ and for which the formula is valid for $N=1$. Moreover, the dimension $n$ is irrelevant for the proof, and therefore we may replace $D$ with $R$ and also consider the formula in higher dimensions. Notice that (\ref{RadialDerivOfQuotient}) is invalid for $N=0$. 

 If  $\frac{1}{u}\in H^\infty(\mathbb B_n)$, then $u$ is uniformly bounded from below, that is, there exists a $0<c<1$ such that $\inf_{z\in\mathbb  B_n} |u(z)|\geq c$ and hence formula (\ref{RadialDerivOfQuotient})  is applicable. By condition \ref{RelationXtoY}  we have that $f\in X(\mathbb B_n)$ if and only if
 $R^N f  \in Y(\mathbb B_n)$. 

One should also notice that $u^k\in M(X(\mathbb B_n))$ and $(\frac{1}{u})^k\in H^\infty(\mathbb B_n)$ for all $k\in\mathbb Z_{\geq 0}$. For $f\in X(\mathbb B_n)$ we obtain that 
$u^kf\in X(\mathbb B_n)$ for all $k\in\mathbb Z_{\geq 0}$, and therefore,

{
\allowdisplaybreaks
\begin{align*}
\norm{   \frac{f}{u}  }_X&\asymp \norm{   R^N \left( \frac{f}{u} \right)  }_Y +\abs{\frac{f(0)}{u(0)}}\\
&\leq  \sum_{k=0}^N  \binom{N+1}{k}  \norm{  \left(   \frac{1}{u}   \right)^{N+1-k}  R^N \left( u^{N-k} f  \right) }_Y+\abs{\frac{f(0)}{u(0)}}\\
&\lesssim   \sum_{k=0}^N   \norm{  \left(   \frac{1}{u}   \right)^{N+1-k} }_\infty \norm{ R^N \left( u^{N-k} f  \right) }_Y+\abs{\frac{f(0)}{u(0)}}\\
&\leq  \left(   \frac{1}{c}   \right)^{N+1}  \sum_{k=0}^N \norm{ R^N \left( u^{N-k} f  \right) }_Y+\abs{\frac{f(0)}{u(0)}}\\
&\lesssim  \sum_{k=0}^N   \norm{  u^{N-k} f  }_X+\abs{\frac{f(0)}{u(0)}} < \infty,
\end{align*}
}where the remark after condition \ref{HinftySubsetMultiplY} gives the second inequality. Hence, we have shown that the two statements \ref{reciprocal_uMX} and \ref{reciprocal_uHinfty} are equivalent. If $\frac{1}{u}\in  M(X(\mathbb B_n))$, then clearly $f\mapsto \frac{f}{u}$ is the inverse of $M_u$.  Conversely, if $M_u$ is invertible, then $M_{\frac{1}{u}}$ must be the unique bounded inverse, so 
$\frac{1}{u}\in M(X(\mathbb B_n)).$
\end{proof}

\begin{thm}\label{MainThmSpec}
Assume that \emph{\ref{finerThanCOTop}}, \emph{\ref{RelationXtoY}} and \emph{\ref{HinftySubsetMultiplY}} are satisfied and let $M_u:X(\mathbb B_n) \to X(\mathbb B_n)$ be a multiplication operator generated by  $u\in M(X(\mathbb B_n))$. The spectrum of $M_u$ is given by $\sigma(M_u) =
 \closed{u(\mathbb B_n)}$.
\end{thm}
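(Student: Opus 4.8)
The plan is to reduce everything to Lemma~\ref{MuInvert} applied to the translated symbol $u-\lambda$. First I would observe that for every $\lambda\in\C$ one has $M_u-\lambda I = M_{u-\lambda}$, and that $u-\lambda\in M(X(\mathbb B_n))$: indeed $u\in M(X(\mathbb B_n))$ by hypothesis, and the constant function $\lambda$ lies in $M(X(\mathbb B_n))$ because $X(\mathbb B_n)$ is a vector space containing the constants. Hence $\lambda\in\sigma(M_u)$ precisely when the multiplication operator $M_{u-\lambda}$ fails to be invertible, and the spectrum is governed entirely by invertibility of these translated multipliers.

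Next I would invoke the equivalence of \ref{reciprocal_uHinfty} and \ref{muInvert} in Lemma~\ref{MuInvert}, with $u$ replaced by $u-\lambda$: the operator $M_{u-\lambda}$ is invertible if and only if $\frac{1}{u-\lambda}\in H^\infty(\mathbb B_n)$. It then remains to translate the membership $\frac{1}{u-\lambda}\in H^\infty(\mathbb B_n)$ into a statement about the image $u(\mathbb B_n)$. If $u-\lambda$ vanishes somewhere in $\mathbb B_n$, then $\frac{1}{u-\lambda}$ is not even holomorphic and the condition fails, while at the same time $\lambda\in u(\mathbb B_n)$; if $u-\lambda$ is zero-free, then $\frac{1}{u-\lambda}\in\HH(\mathbb B_n)$ and it is bounded exactly when $u-\lambda$ is bounded away from $0$, that is, $\inf_{z\in\mathbb B_n}\abs{u(z)-\lambda}>0$. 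In both cases $\frac{1}{u-\lambda}\in H^\infty(\mathbb B_n)$ is equivalent to $\inf_{z\in\mathbb B_n}\abs{u(z)-\lambda}>0$.

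Finally I would note the elementary equivalence that $\inf_{z\in\mathbb B_n}\abs{u(z)-\lambda}>0$ holds if and only if $\lambda\notin\closed{u(\mathbb B_n)}$, since this infimum is precisely the distance from $\lambda$ to $u(\mathbb B_n)$, which is positive exactly when $\lambda$ does not belong to the closure. Stringing these equivalences together yields $\lambda\notin\sigma(M_u)$ if and only if $\lambda\notin\closed{u(\mathbb B_n)}$, hence $\sigma(M_u)=\closed{u(\mathbb B_n)}$. I do not expect any serious obstacle: the entire content is carried by Lemma~\ref{MuInvert}, and the only point deserving care is the remark that adding the constant $-\lambda$ keeps us inside $M(X(\mathbb B_n))$, so that the lemma is genuinely applicable to $u-\lambda$ for every $\lambda\in\C$.
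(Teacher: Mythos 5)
Your proposal is correct and follows essentially the same route as the paper: both reduce the computation of $\sigma(M_u)$ to Lemma~\ref{MuInvert} applied to the translated symbol $u-\lambda$, and both conclude via the elementary observation that $\abs{u(z)-\lambda}$ is bounded away from zero on $\mathbb B_n$ exactly when $\lambda\notin\closed{u(\mathbb B_n)}$. Your write-up merely spells out in more detail the equivalence between $\frac{1}{u-\lambda}\in H^\infty(\mathbb B_n)$ and the distance condition, which the paper leaves implicit.
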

\begin{proof}Let $\lambda\in\mathbb C$. Clearly  $u - \lambda\in  M(X(\mathbb B_n)).$   If $\lambda\in \closed{u(\mathbb B_n)}$, then $|u(z)-\lambda|$ is not bounded from below so  
$M_u-\lambda I = M_{u-\lambda}$ is not invertible by Lemma  \ref{MuInvert}. Using again Lemma \ref{MuInvert}, it
follows that for any $\lambda\in\mathbb{C} \setminus\closed{u(\mathbb B_n)}$ the operator $M_u-\lambda I $ is invertible since $|u(z)-\lambda|$, in this case, is bounded from below. Hence, the spectrum is given by $\sigma(M_u) = \closed{u(\mathbb B_n)}$.
\end{proof}

\begin{rem}  The above result implies that $r(M_u) = ||u||_\infty\le ||M_u||$. Moreover, since the spectrum $\sigma(M_u) =
 \closed{u(\mathbb B_n)}$ is connected, when $u$ is continuous, any nonzero spectral radius would imply an uncountable number of points in the spectrum, from which it follows that the operator is not compact. Consequently, $M_u$ is never compact if $u\ne 0.$
\end{rem}

\begin{cor}\label{examplesOfSpectr}
Let  $X(\mathbb B _n)$  be  any of the following spaces
\begin{enumerate}[label=\upshape(\alph*)]
\itemsep-0.6em 
\item $A^p_{\alpha,\beta}(\mathbb B _n),\ p\geq 1, \ \beta\geq 0, \text{ and }\alpha>-1$;\\
\item $\mathcal B_\alpha (\mathbb B _n),\ \alpha >0$;\\
\item $H_\beta^2(\mathbb B _n),\ \beta\geq 0$;\\
\item $H_w^p(\mathbb D),\ p >1, \ w \in (A^p)$.
\end{enumerate}
Then the spectrum of a multiplication operator $M_u:X(\mathbb B _n)\to X(\mathbb B _n)$ is given by $\sigma(M_u) = \closed{u(\mathbb B_n)}$.
\end{cor}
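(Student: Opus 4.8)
The plan is to recognize this corollary as an immediate consequence of Theorem \ref{MainThmSpec}: that theorem already yields $\sigma(M_u)=\closed{u(\mathbb B_n)}$ for \emph{any} space $X(\mathbb B_n)$ for which one can produce an auxiliary space $Y(\mathbb B_n)$ making conditions \ref{finerThanCOTop}, \ref{RelationXtoY} and \ref{HinftySubsetMultiplY} hold. Thus the entire task reduces to checking, for each of the four listed families, that these three conditions are satisfied, and essentially all of this verification has already been carried out in the \textbf{Examples} of Section 2. So I would structure the proof as a case-by-case appeal to those examples, followed by a single invocation of Theorem \ref{MainThmSpec}. Note that condition \ref{divideByPolynom} plays no role here, since it is only needed for the essential spectrum.

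For the Bloch-type spaces $\mathcal B_\alpha(\mathbb B_n)$ in (b), I would simply point to \textbf{Example (a)}: there one takes $Y(\mathbb B_n)=H^\infty_\alpha(\mathbb B_n)$, condition \ref{finerThanCOTop} is classical, condition \ref{RelationXtoY} follows from the norm equivalence $\norm{\cdot}_{\mathcal B_\alpha}\asymp\norm{\cdot}_{\B R_\alpha}$ with $N=1$, and \ref{HinftySubsetMultiplY} holds by definition. For the weighted Hardy spaces $H^p_w(\mathbb D)$ in (d), I would invoke \textbf{Example (c)}: here $M(X(\mathbb D))=H^\infty(\mathbb D)$, so one sets $Y(\mathbb D)=X(\mathbb D)$, condition \ref{RelationXtoY} becomes irrelevant as noted after that condition, \ref{HinftySubsetMultiplY} is automatic, and \ref{finerThanCOTop} follows from the proof of Lemma 2.1 in \cite{BGZ}.

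The Bergman--Sobolev case (a) requires splitting on the size of $\beta$. When $\beta<\frac{1+\alpha}{p}$ the isomorphism (\ref{formimp}) identifies $A^p_{\alpha,\beta}(\mathbb B_n)$ with a weighted Bergman space, so $M(X(\mathbb B_n))=H^\infty(\mathbb B_n)$ and \textbf{Example (c)} applies with $Y=X$. When $\beta\geq\frac{1+\alpha}{p}$ one is in the genuine Sobolev regime of \textbf{Example (b)}: choosing $Y(\mathbb B_n)=A^p_{(N-\beta)p+\alpha}(\mathbb B_n)$ with the indicated $N$, condition \ref{finerThanCOTop} follows from Lemma 5.6 in \cite{BB}, condition \ref{RelationXtoY} combines (\ref{formimp}) with the norm equivalence (\ref{BergSobNormEquivRel}) from Lemma \ref{BergSobEquivNorm}, and \ref{HinftySubsetMultiplY} holds since $M(A^p_\alpha(\mathbb B_n))=H^\infty(\mathbb B_n)$. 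The Hardy--Sobolev Hilbert space $H^2_\beta(\mathbb B_n)=A^2_{-1,\beta}(\mathbb B_n)$ in (c) is handled by exactly the same argument, using that both (\ref{formimp}) and Lemma \ref{BergSobEquivNorm} remain valid in the endpoint case $\alpha=-1$, $p=2$.

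With all three conditions verified in each instance, a single application of Theorem \ref{MainThmSpec} gives $\sigma(M_u)=\closed{u(\mathbb B_n)}$ in every case. I do not expect any serious obstacle: the substance of the corollary lies entirely in the earlier verifications, and the only point demanding care is the bookkeeping of the two regimes for $A^p_{\alpha,\beta}$ together with making sure the endpoint $\alpha=-1$, $p=2$ is covered for the Hardy--Sobolev space.
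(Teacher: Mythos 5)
Your proposal is correct and is essentially the paper's own (implicit) argument: the corollary is stated without a separate proof precisely because Theorem \ref{MainThmSpec} applies once conditions \ref{finerThanCOTop}, \ref{RelationXtoY} and \ref{HinftySubsetMultiplY} are verified, and that verification is exactly the content of \textbf{Examples} (a)--(c) in Section 2, including the split of $A^p_{\alpha,\beta}$ into the regimes $\beta < \frac{1+\alpha}{p}$ (where $Y=X$ and \ref{RelationXtoY} is irrelevant) and $\beta \geq \frac{1+\alpha}{p}$, with the endpoint $\alpha=-1$, $p=2$ covering $H^2_\beta$.
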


\bigskip

\section{The essential spectrum of $M_u$}\label{sect4}

\bigskip

Examining the essential spectrum of a multiplication operator when the domain is $\mathbb B_n, \ n>1$, the result concerning $H_\beta^2$, obtained by Cao, He and Zhu, can be made quite general, see Theorem \ref{EssSpecForMultiDim}. In the case $n=1$, we have obtained a sufficient condition for Fredholmness in Lemma \ref{ThenFredholm_ForX}, where all four conditions \ref{finerThanCOTop}-\ref{divideByPolynom} were assumed. For the spaces mentioned in our main result, namely Theorem \ref{MainResEssSpec}, this condition is also necessary for Fredholmness, see lemmas \ref{IfFredholm_BlochType} and \ref{IfFredholm_BergmanSobolev}, but for this to be proved, space-specific properties were used. An asymptotic approximation for the behaviour of the norm of the peak functions is necessary for the result concerning Bergman-Sobolev spaces. The estimate given in Lemma 11 in \cite{CHZ} is insufficient for our purposes, not only because it only considers $p=2$, but also because it is not a sharp lower bound. The necessity of an asymptotic approximation instead of a non-sharp lower bound of the behaviour is clear when an arbitrary $p\in(1,\infty)$ is considered in Theorem \ref{IfFredholm_BergmanSobolev}.

\begin{thm}\label{EssSpecForMultiDim} Assume that condition \emph{\ref{finerThanCOTop}} is satisfied and $n >1$. Furthermore, let $u\in M(X(\mathbb B_n))$  and $P_j\colon \mathbb B_n \to \C, \, P_j(z) = z_j$ for every $j = 1, \ldots , n.$ Suppose that $P_j \in  M(X(\mathbb B_n))$ for every $j$. 

Then $  \sigma_e(M_u)  = \bigcap_{0 < r < 1} \closed{u(\mathbb B_n \setminus r\mathbb B_n)} = \closed{u(\mathbb B_n)}  = \sigma(M_u) $.
\end{thm}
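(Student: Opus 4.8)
The plan is to establish the cyclic chain of inclusions
\[
\overline{u(\mathbb B_n)} \subseteq \sigma_e(M_u) \subseteq \sigma(M_u) = \overline{u(\mathbb B_n)}
\]
together with the purely function-theoretic identity $\bigcap_{0<r<1}\overline{u(\mathbb B_n\setminus r\mathbb B_n)}=\overline{u(\mathbb B_n)}$; the two together force all four sets to coincide. Here $\sigma_e(M_u)\subseteq\sigma(M_u)$ is automatic, and $\sigma(M_u)=\overline{u(\mathbb B_n)}$ is Theorem~\ref{MainThmSpec} (the inclusion $\overline{u(\mathbb B_n)}\subseteq\sigma(M_u)$ needed to close the loop already follows from condition~\ref{finerThanCOTop} alone, since for $\lambda=u(z_0)$ the range of $M_{u-\lambda}$ lies in the proper closed subspace $\ker\delta_{z_0}$, so $M_{u-\lambda}$ is not invertible). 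Thus the two substantial points are the function-theoretic identity and the new inclusion $\overline{u(\mathbb B_n)}\subseteq\sigma_e(M_u)$, and both are exactly where $n>1$ enters.

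For the function-theoretic identity the inclusion $\supseteq$ is immediate, while for $\subseteq$ I would first treat an attained value $\lambda=u(z_0)$. If $u$ is constant the claim is trivial, and otherwise the fiber $Z:=\{z\in\mathbb B_n:u(z)=\lambda\}$ is a nonempty analytic set of pure dimension $n-1\geq 1$. A positive-dimensional analytic subvariety of $\mathbb B_n$ cannot be relatively compact (else the coordinate functions would be non-constant holomorphic functions on a compact variety, violating the maximum principle), so $Z$ accumulates on $\partial\mathbb B_n$: there are $z_k\in Z$ with $|z_k|\to 1$ and $u(z_k)=\lambda$, whence $\lambda\in\overline{u(\mathbb B_n\setminus r\mathbb B_n)}$ for every $r$. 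Since $\bigcap_r\overline{u(\mathbb B_n\setminus r\mathbb B_n)}$ is closed and now contains $u(\mathbb B_n)$, it contains $\overline{u(\mathbb B_n)}$.

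For $\overline{u(\mathbb B_n)}\subseteq\sigma_e(M_u)$, I would use that $\sigma_e(M_u)$ is closed (the Fredholm operators form an open set) to reduce to showing $u(\mathbb B_n)\subseteq\sigma_e(M_u)$. Fix $\lambda=u(z_0)$, set $v=u-\lambda$, and assume $v\neq0$ (if $u\equiv\lambda$ then $M_u-\lambda I=0$ is plainly non-Fredholm). The goal is that $M_v$ has infinite-dimensional cokernel. First, $P_j\in M(X(\mathbb B_n))$ and $1\in X(\mathbb B_n)$ give $z^k=P_1^{k_1}\cdots P_n^{k_n}\cdot 1\in X(\mathbb B_n)$ for every multi-index $k$, so $\mathcal P(\mathbb B_n)\subseteq X(\mathbb B_n)$. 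Next, $\operatorname{range}(M_v)=v\,X(\mathbb B_n)$ is contained in the subspace $W=\{g\in X(\mathbb B_n):g|_{Z}=0\}=\bigcap_{a\in Z}\ker\delta_a$, which is closed by condition~\ref{finerThanCOTop}. Hence there is a surjection $X(\mathbb B_n)/\operatorname{range}(M_v)\twoheadrightarrow X(\mathbb B_n)/W$, and the target is isomorphic to the restriction space $\{f|_{Z}:f\in X(\mathbb B_n)\}$. Since $Z$ is positive-dimensional it contains a non-constant holomorphic disk $\phi\colon\mathbb D\to Z$ along which some coordinate $z_j$ is non-constant, so the restrictions $z_j^m|_{Z}$ pull back to the linearly independent functions $(z_j\circ\phi)^m$; the restriction space is therefore infinite-dimensional. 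Consequently $\dim\bigl(X(\mathbb B_n)/\operatorname{range}(M_v)\bigr)=\infty$, so $M_v=M_u-\lambda I$ is not Fredholm and $\lambda\in\sigma_e(M_u)$.

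I expect the last step to be the main obstacle: turning the geometric fact that $Z$ is positive-dimensional (which holds precisely because $n>1$) into the operator-theoretic conclusion that the cokernel of $M_{u-\lambda}$ is infinite-dimensional. The points requiring care are the inclusion $\mathcal P(\mathbb B_n)\subseteq X(\mathbb B_n)$ (so that $X(\mathbb B_n)$ contains enough functions to detect directions along $Z$), the closedness of $W$, and the infinite-dimensionality of the restriction space — this is exactly what fails in the one-dimensional case, where the fibers of $u$ are discrete and $M_{u-\lambda}$ can be Fredholm, forcing the separate and harder treatment of $n=1$.
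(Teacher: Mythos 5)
Your proof is correct, but it takes a genuinely different route from the paper's in both substantial steps. For $u(\mathbb B_n)\subseteq\sigma_e(M_u)$ the paper argues on the adjoint side: since $n>1$ the zero set of $u-\lambda$ is infinite, so one can pick zeros $\alpha_k$ whose $j$-th coordinates are pairwise distinct, and a Vandermonde-type induction using $P_j\in M(X(\mathbb B_n))$ shows that the evaluations $\delta_{\alpha_k}$ are linearly independent in $\Ker M^*_{u-\lambda}$; you instead bound the cokernel, noting that $\operatorname{range}(M_{u-\lambda})$ lies inside the closed subspace of functions vanishing on the fiber $Z$ and that the restriction space $\{f|_Z : f\in X(\mathbb B_n)\}$ is infinite-dimensional because it contains the powers $z_j^m|_Z$, which you separate via a holomorphic disk in $Z$. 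The two arguments are dual to each other ($\Ker M^*_{u-\lambda}=(\operatorname{range} M_{u-\lambda})^\perp$) and both consume the hypothesis $P_j\in M(X(\mathbb B_n))$ in an essential way; the paper's induction needs only the fact that the zero set is infinite, while your disk argument invokes the regular-point structure of analytic sets (you could avoid it: some coordinate $z_j$ takes infinitely many values on $Z$, and a linear relation among the $z_j^m|_Z$ would give a one-variable polynomial with infinitely many roots). For the remaining equalities, the paper proves $\sigma(M_u)\subseteq\bigcap_{0<r<1}\overline{u(\mathbb B_n\setminus r\mathbb B_n)}$ directly: off that set, $(u-\lambda)^{-1}$ is bounded and holomorphic on an outer annulus, Hartogs' extension theorem and the identity theorem extend it to all of $\mathbb B_n$, and Lemma \ref{MuInvert} then gives invertibility. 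You instead establish the purely function-theoretic identity $\bigcap_{0<r<1}\overline{u(\mathbb B_n\setminus r\mathbb B_n)}=\overline{u(\mathbb B_n)}$ --- a positive-dimensional fiber cannot be relatively compact, so every attained value is attained arbitrarily close to $\partial\mathbb B_n$ --- and then quote Theorem \ref{MainThmSpec}; this trades Hartogs for the non-compactness of positive-dimensional analytic varieties, and these two devices are exactly where $n>1$ enters in the respective treatments. One caveat, which affects you and the paper equally: your appeal to Theorem \ref{MainThmSpec}, like the paper's appeal to Lemma \ref{MuInvert}, uses conditions \ref{RelationXtoY} and \ref{HinftySubsetMultiplY}, which the statement of the theorem does not list explicitly; so your proof consumes no hypotheses beyond those the paper's own proof already needs.
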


\begin{proof}  Let $\lambda\in u(\mathbb B_n).$ Since $n >1$, the function $u(z)  - \lambda$ has infinitely many  distinct zeros, and therefore, there must exist an infinite subset $\{\alpha_k\}_{k=1}^\infty, \ \alpha_k=(\alpha_{k,1},...,\alpha_{k,n}),$ of these zeros such that for some $j = 1,\ldots,n$ we have $\alpha_{k,j} \ne \alpha_{l,j}$ whenever $k \ne l$.  
We first show, by induction, that $(\delta_{\alpha_k})_{k=1}^\infty$  are  linearly independent in $Ker \ M^*_{u-\lambda}$. Clearly all  $\delta_{\alpha_k}\in Ker \ M^*_{u-\lambda}.$ Suppose that
\[
\sum_{k=1}^m c_k \delta_{\alpha_k}=0
\]
for some $m\in\mathbb Z_{\geq 1}$. If $m=1$, it follows that $c_1=0$. Assume that $m\geq 2$. For arbitrary $f\in X(\mathbb B_n)$ we have by assumption that  $P_j f\in X(\mathbb B_n)$, so
\[
\sum_{k=1}^m c_k \alpha_{k,j} \delta_{\alpha_k}(f)=0  \ \text{and}\ \sum_{k=1}^m c_k \delta_{\alpha_k}(f)=0.
\]
Hence 
\[
\sum_{k=2}^m c_k (\alpha_{k,j} - \alpha_{1,j})\delta_{\alpha_k}(f)= \sum_{k=1}^m c_k (\alpha_{k,j} - \alpha_{1,j})\delta_{\alpha_k}(f)=0 \ \text{for all} \  f \in  X(\mathbb B_n),
\]
and therefore, by the induction hypothesis, $c_k(\alpha_{k,j} - \alpha_{1,j})=0$ for all $k=2,...,m.$  This implies that $c_k=0$ for $k=2,...,m,$ and consequently $c_1=0.$
Then   $Ker \ M^*_{u-\lambda}$ is infinite dimensional so that $M^*_{u-\lambda}$, and equivalently $M_{u-\lambda}$, is not Fredholm. It follows that $u(\mathbb B_n)\subset \sigma_e(M_u)$ and, moreover, that
\[
 \bigcap_{0 < r < 1} \closed{u(\mathbb B_n \setminus r\mathbb B_n)}\subset \closed{u(\mathbb B_n)} \subset  \sigma_e(M_u)\subset \sigma(M_u).
\]
For the converse conclusion,
let      $\lambda \notin \bigcap_{0 < r < 1} \closed{u(\mathbb B_n \setminus r\mathbb B_n)}.$ Hence, there are $r\in (0,1)$ and $\delta>0$ such that $|\lambda - u(z)| \ge \delta$ for all
$r < |z| < 1.$ Then $v(z) = (u(z) - \lambda)^{-1}$ is holomorphic and bounded on $\mathbb B_n\setminus r\overline{\mathbb B}_n.$  As in \cite{CHZ}, using Hartogs' extension theorem and the identity theorem,   we can extend $v$  to a function  $\tilde v\in\HH(\mathbb B_n)$ such that 
$\tilde v(z) = (u(z) - \lambda)^{-1}$ for all  $z\in \mathbb B_n,$ and therefore $\tilde v\in H^\infty(\mathbb B_n).$ Now  $M_{u-\lambda}$ is invertible by
 Lemma \ref{MuInvert}, so $\lambda\notin\sigma(M_u).$
\end{proof}

\begin{rem}\label{nOneIndependentPointEval}
Following the proof of Theorem \ref{EssSpecForMultiDim} it is clear that $(\delta_{\alpha_k})_{k=1}^\infty$ are linearly independent when $n=1$.
\end{rem}
Now we proceed to the case $n=1$.

The following result is based on ideas due to Axler \cite{A} that was  carried on in \cite{BDL}. It holds for all  spaces $X(\mathbb D)$  such that 
$M(X(\mathbb D)) = H^\infty(\mathbb D).$  

\begin{lem}\label{IfFredholm_ForX} Assume that condition \emph{\ref{finerThanCOTop}} is satisfied and let $u\in M(X(\mathbb D))=H^\infty(\mathbb D)$. If $M_u \colon X(\mathbb D)\to X(\mathbb D)$ is Fredholm, then there are $r\in (0,1)$ and $\delta>0$ such that $|u(z)| \ge \delta$ for all 
$r\le |z| < 1.$
\end{lem}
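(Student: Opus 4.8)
The plan is to prove the contrapositive. If the conclusion fails, then for every $r\in(0,1)$ and $\delta>0$ there is a point $z$ with $r\le\abs{z}<1$ and $\abs{u(z)}<\delta$; taking $r_k\uparrow 1$ and $\delta_k\downarrow 0$ yields a sequence $(z_k)$ with $\abs{z_k}\to 1$ and $u(z_k)\to 0$, and after passing to a subsequence I may assume the $z_k$ are pairwise distinct with $\abs{z_k}$ strictly increasing. The first step is to move to the adjoint. Since $(M_u^*\delta_z)(f)=\delta_z(uf)=u(z)f(z)=u(z)\delta_z(f)$, each evaluation functional satisfies $M_u^*\delta_{z_k}=u(z_k)\delta_{z_k}$, exactly as in the computation behind Theorem \ref{EssSpecForMultiDim}; thus the $\delta_{z_k}$ are approximate eigenvectors of $M_u^*$ for the eigenvalue $0$, and by Remark \ref{nOneIndependentPointEval} they are linearly independent and nonzero.

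Next I would exploit Fredholmness through the adjoint. Because $M_u$ is Fredholm on $X(\D)$, so is $M_u^*$ on $X(\D)^*$; choosing a parametrix $M_uS=I-K$ with $K$ compact and taking adjoints gives $S^*M_u^*=I-K^*$ with $K^*$ compact. Writing $\phi_k=\delta_{z_k}/\norm{\delta_{z_k}}$, so that $\norm{\phi_k}=1$ and $\norm{M_u^*\phi_k}=\abs{u(z_k)}\to 0$, and feeding $\phi_k$ into this identity yields $\phi_k-K^*\phi_k=u(z_k)S^*\phi_k\to 0$ in norm. Compactness of $K^*$ then forces a norm-convergent subsequence $\phi_{k_j}\to\psi$ with $\norm{\psi}=1$ and $M_u^*\psi=0$.

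The contradiction I am aiming for is that $(\phi_k)$ can have no norm-convergent subsequence at all. The cleanest route is to show $\phi_k\to 0$ in the weak-$*$ topology: the norm limit $\psi$ produced above would then have to vanish, contradicting $\norm{\psi}=1$, so $M_u^*$, hence $M_u$, could not be Fredholm. Weak-$*$ nullity means $f(z_k)/\norm{\delta_{z_k}}\to 0$ for every $f\in X(\D)$, which is immediate once $\norm{\delta_{z_k}}\to\infty$; this is precisely the behaviour of the evaluation functionals on the growth spaces $H^\infty_\alpha(\D)$, the weighted Hardy spaces $H^p_w(\D)$, and the Bergman and Hardy spaces to which the lemma is meant to apply.

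I expect the genuine obstacle to be securing this singular-sequence property from the bare hypotheses $M(X(\D))=H^\infty(\D)$ and \ref{finerThanCOTop}, since $\norm{\delta_z}$ need not blow up at the boundary in general. To handle this uniformly I would instead first extract from $(z_k)$ a pseudohyperbolically separated subsequence — always possible, as any pseudohyperbolic ball is relatively compact in $\D$ and so meets $(z_k)$ finitely often — and such a sequence is interpolating for $H^\infty(\D)$. This provides functions $h_k\in H^\infty(\D)=M(X(\D))$ with $h_k(z_j)=\delta_{jk}$ and $\sup_k\norm{h_k}_\infty<\infty$; since $X(\D)$ contains the constants, $g_k:=M_{h_k}1=h_k$ lie in $X(\D)$ with $\sup_k\norm{g_k}_X\le C\sup_k\norm{h_k}_\infty\norm{1}_X<\infty$, giving a bounded system biorthogonal to $(\delta_{z_k})$ that rules out norm convergence of the $\phi_k$. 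Reconciling this biorthogonality with the normalization when $\norm{\delta_{z_k}}$ may be unbounded is the delicate point, and it is exactly here that the ideas of Axler \cite{A}, carried on in \cite{BDL}, are needed.
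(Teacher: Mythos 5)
Your reduction of Fredholmness to the claim that the normalized evaluations $\phi_k=\delta_{z_k}/\norm{\delta_{z_k}}$ admit no norm-convergent subsequence is sound (it is the standard singular-sequence criterion, run through a parametrix and adjoints), but the proposal never proves that claim, and that claim is the entire content of the lemma. Your weak-$*$ route needs $f(z_k)/\norm{\delta_{z_k}}\to 0$ for every $f\in X(\D)$; this fails under the stated hypotheses, for instance for $X(\D)=H^\infty(\D)$ itself, which satisfies \ref{finerThanCOTop} and $M(X(\D))=H^\infty(\D)$ and has $\norm{\delta_z}\equiv 1$. Your biorthogonal route (with $g_k=h_k=M_{h_k}1$) only yields $\norm{\phi_j-\phi_k}\ge \bigl(\norm{\delta_{z_j}}\sup_m\norm{g_m}_X\bigr)^{-1}$, which degenerates exactly when $\norm{\delta_{z_k}}\to\infty$; so each of your two devices works only in the regime where the other fails, neither regime is guaranteed, and you explicitly leave the reconciliation (``the delicate point'') to the ideas of Axler \cite{A} and \cite{BDL} rather than carrying it out. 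This is a genuine gap. For what it is worth, the biorthogonal device can be repaired so as to cover all cases: choose $f_k\in X(\D)$ with $\norm{f_k}_X=1$ and $\abs{f_k(z_k)}\ge\frac12\norm{\delta_{z_k}}$, and set $g_k=h_kf_k$; then $\norm{g_k}_X\le\norm{M_{h_k}}\le C\norm{h_k}_\infty$ is uniformly bounded, while $\abs{\phi_k(g_k)}\ge\frac12$ and $\phi_j(g_k)=0$ for $j\ne k$, which gives a uniform lower bound on $\norm{\phi_j-\phi_k}$ independent of $\norm{\delta_{z_k}}$. But this step is absent from your write-up. A secondary error: pseudohyperbolic separation does not imply that a sequence is interpolating for $H^\infty(\D)$; by Carleson's theorem one also needs $\sum_k(1-\abs{z_k}^2)\delta_{z_k}$ to be a Carleson measure. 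What is true, and what the paper invokes, is that any sequence tending to $\partial\D$ has an interpolating subsequence, obtained e.g.\ by passing to a subsequence with $1-\abs{z_{k+1}}\le\frac12(1-\abs{z_k})$.

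You should also note that the paper's own proof sidesteps your delicate point entirely: it never analyzes $\delta_{z_k}$ as approximate eigenvectors. Instead, using the uniform interpolation constant, it produces $u_N\in H^\infty(\D)$ with $\norm{u_N}_\infty\le M\sup_{n\ge N}\abs{u(z_n)}$ and $(u-u_N)(z_n)=0$ for all $n\ge N$. Then $M_{u-u_N}$ maps $X(\D)$ into the closed subspace $Z_N$ of functions vanishing at all $z_n$, $n\ge N$, which has infinite codimension by Remark \ref{nOneIndependentPointEval}, so $M_{u-u_N}$ is not Fredholm; since $\norm{M_u-M_{u-u_N}}\le C\norm{u_N}_\infty\to 0$ and the set of non-Fredholm operators is closed, $M_u$ is not Fredholm. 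That argument needs no control whatsoever on $\norm{\delta_{z_k}}$, which is precisely what your approach cannot avoid.
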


\begin{proof}  Assume we can find  a sequence $(z_n)_{n=1}^\infty \subset { \mathbb D}$  with $|z_n|\to 1$ and $|u(z_n)|\to 0$  when $n\to\infty.$   Then we can assume that $(z_n)_n$ is an interpolating sequence in $H^\infty(\mathbb D)$ by going to a subsequence if necessary. Therefore, (see e.g. \cite{An}, Ch. 7.3) there is a constant $M> 0$ such that for  each $N\in\mathbb N$ there is a function $u_N\in H^\infty(\mathbb D)$ with
\begin{equation*}
u_N(z_n) = \left\{
\begin{array}{@{}ll}
u(z_n), &  n\ge N,\\
0, &  n < N
\end{array} \right.
\end{equation*}
and $\norm{u_N}_\infty \le M \sup_{n\ge N} \abs{u(z_n)}$. Let
\[
Z_N = \{ f \in X(\mathbb D): \delta_{z_n}(f) = 0 \ \text{for all} \ n\ge N\},
\]
which is a closed subspace of $X(\mathbb D)$. From Remark \ref{nOneIndependentPointEval} we know that the $\delta_{z_n}\in X(\mathbb D)^*$   are  linearly independent, which implies that $Z_N ^\perp$ is infinite-dimensional. Since $\delta_{z_n}(u- u_N) =0$ for all $n\ge N$, we get $M_{u- u_N}(X(\mathbb D))\subset Z_N.$ Now  $(X(\mathbb D)/Z_N)^* = Z_N^\perp$, so $X(\mathbb D)/Z_N$  is infinite-dimensional.
Hence $X(\mathbb D)/ M_{u- u_N}(X(\mathbb D))$  is also infinite-dimensional, and $M_{u -u_N}\colon X(\mathbb D) \to X(\mathbb D)$ is not Fredholm. As
 $M(X(\mathbb D)) = H^\infty(\mathbb D)$  and the set of non-Fredholm operators is closed, it follows from 
\[
\norm{M_{u - u_N} - M_u} = \norm{M_{u_N}}\le C \norm{u_N}_\infty\le C M \sup_{n\ge N} \abs{u(z_n)}\to 0 \ \text{as} \ N\to\infty,
\]
that  $M_u$ is not Fredholm.
\end{proof}

\begin{lem}\label{ThenFredholm_ForX}  
Assume that \emph{\ref{finerThanCOTop}}, \emph{\ref{RelationXtoY}}, \emph{\ref{HinftySubsetMultiplY}} and \emph{\ref{divideByPolynom}} are satisfied and let  $u\in M(X(\mathbb D))$. If  there are $r\in (0,1)$ and $\delta>0$ such that $|u(z)| \ge \delta$ for all $r\le |z| < 1$,  then $M_u \colon X(\mathbb D)\to X(\mathbb D)$  is Fredholm.
\end{lem}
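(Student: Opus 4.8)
The plan is to exploit the fact that the hypothesis confines every zero of $u$ to a compact subset of $\mathbb D$, so that $u$ factors as a polynomial times a unit. First I would record that $M_u$ is automatically injective: since $u$ is bounded below near $\partial\mathbb D$ it is not identically zero, and as $X(\mathbb D)\subset\HH(\mathbb D)$, the equation $uf=0$ forces $f=0$. Thus the only Fredholm data left to control are the closedness and the finiteness of the codimension of the range.

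Next I would factor $u$. Because $\abs{u(z)}\ge\delta$ for $r\le\abs{z}<1$, every zero of $u$ lies in the open disk $r\mathbb D$, whose closure is a compact subset of $\mathbb D$; as $u$ is holomorphic and not identically zero it has only finitely many zeros $z_1,\dots,z_m$ there, listed with multiplicity. Writing $p(z)=\prod_{j=1}^m(z-z_j)$ and $v=u/p$, the function $v\in\HH(\mathbb D)$ is zero-free on $\mathbb D$. I would then check $1/v\in H^\infty(\mathbb D)$: near the boundary $\abs{v}=\abs{u}/\abs{p}\ge\delta/\norm{p}_\infty$, while on the compact set $\overline{r\mathbb D}$ the continuous zero-free function $v$ is bounded away from $0$. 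The key step is to show $v\in M(X(\mathbb D))$, and this is precisely where condition \ref{divideByPolynom} enters: for $f\in X(\mathbb D)$ the function $uf$ lies in $X(\mathbb D)$ and vanishes at each $z_j$ to at least its multiplicity, so peeling off the linear factors of $p$ one at a time and invoking \ref{divideByPolynom} at each stage (the intermediate quotients stay in $X(\mathbb D)$ and still vanish at the relevant points) yields $vf=uf/p\in X(\mathbb D)$. Since \ref{finerThanCOTop}, \ref{RelationXtoY} and \ref{HinftySubsetMultiplY} hold and $1/v\in H^\infty(\mathbb D)$, Lemma \ref{MuInvert} gives that $M_v$ is invertible; in particular $1/v\in M(X(\mathbb D))$ and $M_v^{-1}=M_{1/v}$.

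With $M_v$ invertible, the factorization $M_u=M_pM_v$ reduces the problem to showing that $M_p$ is Fredholm, where $M_p=M_{1/v}M_u$ is a bounded operator on $X(\mathbb D)$. I would identify its range precisely: $M_p$ is injective, and I claim $\operatorname{ran}M_p=\{g\in X(\mathbb D): g \text{ vanishes at each } z_j \text{ to at least the multiplicity of } z_j \text{ in } p\}$. The inclusion $\subseteq$ is immediate, and $\supseteq$ follows once more from iterated use of \ref{divideByPolynom}, since such a $g$ satisfies $g/p\in X(\mathbb D)$ and $g=M_p(g/p)$. Each vanishing condition is the annihilation of a functional $f\mapsto f^{(i)}(z_j)$; condition \ref{finerThanCOTop} forces these derivative-evaluation functionals to be bounded, because norm convergence implies convergence in $\tau_0$ and hence, by Cauchy's estimates, convergence of all derivatives at each point. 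Therefore $\operatorname{ran}M_p$ is the intersection of the kernels of finitely many bounded functionals, hence closed and of codimension at most $m=\deg p$. Thus $M_p$ is Fredholm, and consequently so is $M_u=M_pM_v$.

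The main obstacle is the verification that $v$ is a genuine multiplier of $X(\mathbb D)$, not merely that $1/v$ is bounded: this is the only place where the division hypothesis \ref{divideByPolynom} does essential work, and it must be applied iteratively to handle zeros of higher multiplicity while confirming that the successive quotients remain in $X(\mathbb D)$ and keep the required zeros. Everything else — injectivity, the growth estimate giving $1/v\in H^\infty(\mathbb D)$, and the boundedness of the derivative functionals — is routine once the factorization $u=pv$ is in place.
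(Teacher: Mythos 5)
Your proposal is correct and follows essentially the same route as the paper's proof: factor $u=pv$ with $p$ a polynomial carrying the finitely many zeros, use condition \ref{divideByPolynom} to show $v\in M(X(\mathbb D))$ and Lemma \ref{MuInvert} to invert $M_v$, and identify the range as the intersection of kernels of finitely many bounded derivative-evaluation functionals $\delta_{z_j}^{(i)}$. The only cosmetic difference is that you deduce finite codimension directly from that description of the range (via the factorization $M_u=M_pM_v$), whereas the paper reaches the same conclusion through an annihilator/duality argument; both steps are valid.
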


\begin{proof}  By assumption we have that $u$  can have only finitely many zeros $\alpha_1,...\alpha_n$ inside $\mathbb D$ with multiplicities $m_1,...,m_n$ respectively.
Then for all $z\in\mathbb D$,
\[
u(z) = v(z) (z - \alpha_1)^{m_1}... (z - \alpha_n)^{m_n}= v(z) p(z),
\]
where $v\in\HH(\mathbb D)$ and $\frac{1}{v}\in H^\infty(\mathbb D)$. 

Let us now define the point evaluation  maps for derivatives by  $\delta^{(k)}_z(f) = f^{(k)}(z)$ for   all $z\in \mathbb D$ and all $k \in\mathbb Z_{\geq 0}$. By assumption \ref{finerThanCOTop}, it holds that $\delta^{(k)} _z  \in X(\mathbb D)^*$ for all $k$ and $z$. Clearly,
\[
M_u(X(\mathbb D)) \subset \bigcap_{i=1}^n\bigcap _{k=0}^{m_i-1} {\text  Ker} \ \delta_{\alpha_i}^{(k)}.
\]
Let $f \in  \bigcap_{i=1}^n\bigcap _{k=0}^{m_i-1} {\text  Ker} \ \delta_{\alpha_i}^{(k)},$  so  $ f^{(k)}(\alpha_i)=0$ for   all $i=1,...,n$ and  all $k=0,...,{m_i}-1.$ Then $\frac{f}{u} \in
 \HH(\mathbb D).$ Now assumption \ref{divideByPolynom} implies that $v\in M(X(\mathbb D)).$ Indeed, if $g\in X(\mathbb D)$, then $ug\in X(\mathbb D)$ and by assumption \ref{divideByPolynom} it follows  that $v g = \frac{ug}{p} \in X(\mathbb D).$  Therefore, $\frac{1}{v} \in M(X(\mathbb D))$ by Lemma \ref{MuInvert}, so that $\frac{f}{u} = \frac{f/p}{v}\in X(\mathbb D)$ by assumption \ref{divideByPolynom}. As a result, $f=u\frac{f}{u}\in M_u(X(\mathbb D))$, and thus
\[
M_u(X(\mathbb D)) = \bigcap_{i=1}^n\bigcap _{k=0}^{m_i-1} \text{ Ker} \ \delta_{\alpha_i}^{(k)}.
\]
Consequently, $M_u$ has closed range, and  since  $M_u\colon  X(\mathbb D)\to X(\mathbb D)$  is always  injective, the dimension of the kernel of $M_u$ is finite.
Since $^\perp(\spa\{\delta_{\alpha_i}^{(k)}\}) = {\text Ker} \  \delta_{\alpha_i}^{(k)}$, it follows that  the $w^*$-closed  one-dimensional space  
$\spa\{\delta_{\alpha_i}^{(k)}\}  = ( {\text  Ker} \ \delta_{\alpha_i}^{(k)})^\perp$, see \cite[Theorem 11 on p.~341]{M}. Therefore, by \cite[Theorem 13 on p.~342]{M}, we have
\[
M_u(X(\mathbb D))^\perp = \sum_{i=1}^n\sum _{k=0}^{m_i-1} (\text{ Ker} \ \delta_{\alpha_i}^{(k)})^\perp =\sum_{i=1}^n\sum _{k=0}^{m_i-1} \spa
\{ \delta_{\alpha_i}^{(k)} \},
\]
and hence, the dimension of the co-kernel of $M_u$ is finite, and $M_u$ is Fredholm.
\end{proof}

\begin{thm}\label{EssSpecMlutiHInfty}  Assume that \emph{\ref{finerThanCOTop}}, \emph{\ref{HinftySubsetMultiplY}} and \emph{\ref{divideByPolynom}} are satisfied and 
$M(X(\mathbb D)) = H^\infty(\mathbb D).$ Let $M_u\colon X(\mathbb D) \to X(\mathbb D)$ be a multiplication operator generated by  $u\in M(X(\mathbb D))$. 
The essential spectrum of $M_u$ is given by 
$\sigma_e(M_u) = \bigcap_{0 < r < 1} \closed{u(\mathbb D \setminus r\mathbb D)}$.
\end{thm}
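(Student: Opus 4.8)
The plan is to establish the two set inclusions separately, exploiting the equivalences already available. First I would prove that $\bigcap_{0<r<1}\closed{u(\D\setminus r\D)}\subset\sigma_e(M_u)$ by contraposition: suppose $\lambda\notin\sigma_e(M_u)$, so that $M_{u-\lambda}$ is Fredholm. Since $M(X(\D))=H^\infty(\D)$, I can invoke Lemma \ref{IfFredholm_ForX} with $u-\lambda$ in place of $u$, which produces $r\in(0,1)$ and $\delta>0$ with $\abs{u(z)-\lambda}\ge\delta$ for all $r\le\abs{z}<1$. This means $\lambda$ is bounded away from $u(z)$ on the annular region $\{r\le\abs z<1\}$, hence $\lambda\notin\closed{u(\D\setminus r\D)}$, and therefore $\lambda\notin\bigcap_{0<r'<1}\closed{u(\D\setminus r'\D)}$. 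Contrapositively, the first inclusion holds.

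For the reverse inclusion $\sigma_e(M_u)\subset\bigcap_{0<r<1}\closed{u(\D\setminus r\D)}$, I would again argue by contraposition. Suppose $\lambda\notin\bigcap_{0<r<1}\closed{u(\D\setminus r\D)}$, so there exist $r\in(0,1)$ and $\delta>0$ with $\abs{u(z)-\lambda}\ge\delta$ for all $r\le\abs z<1$. This is precisely the hypothesis of Lemma \ref{ThenFredholm_ForX}. The only obstacle is that Lemma \ref{ThenFredholm_ForX} is stated under the full list of conditions \ref{finerThanCOTop}, \ref{RelationXtoY}, \ref{HinftySubsetMultiplY} and \ref{divideByPolynom}, whereas the present theorem assumes only \ref{finerThanCOTop}, \ref{HinftySubsetMultiplY}, \ref{divideByPolynom} together with $M(X(\D))=H^\infty(\D)$. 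However, as noted in the remark following condition \ref{RelationXtoY} in the excerpt, when $M(X(\D))=H^\infty(\D)$ one takes $Y(\D)=X(\D)$ and condition \ref{RelationXtoY} becomes trivially satisfiable (and Lemma \ref{MuInvert}, on which Lemma \ref{ThenFredholm_ForX} depends, holds trivially). Thus all four hypotheses of Lemma \ref{ThenFredholm_ForX} are in force, and I conclude that $M_{u-\lambda}$ is Fredholm, i.e.\ $\lambda\notin\sigma_e(M_u)$. This establishes the second inclusion.

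Combining the two inclusions yields $\sigma_e(M_u)=\bigcap_{0<r<1}\closed{u(\D\setminus r\D)}$, completing the proof. The main conceptual point to verify carefully is the compatibility of the hypothesis sets between the present theorem and the two cited lemmas; everything else reduces to the standard observation that $\lambda$ being bounded away from $u$ on an annulus near the boundary is the exact bridge between membership in the nested intersection of closures and Fredholmness of $M_{u-\lambda}$. I expect no computational obstacle, since the quantitative work has already been absorbed into lemmas \ref{IfFredholm_ForX} and \ref{ThenFredholm_ForX}; the proof is essentially a clean assembly of these two converse implications.
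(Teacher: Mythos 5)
Your proposal is correct and takes essentially the same approach as the paper's own proof: both directions amount to applying Lemma \ref{IfFredholm_ForX} and Lemma \ref{ThenFredholm_ForX} to $M_{u-\lambda}$, with the use of Lemma \ref{ThenFredholm_ForX} justified, exactly as you note, by the remark following condition \ref{RelationXtoY} (taking $Y(\D)=X(\D)$ when $M(X(\D))=H^\infty(\D)$). The only difference is cosmetic: the paper phrases the argument as a chain of equivalences via sequences $(z_n)$ with $|u(z_n)-\lambda|\to 0$, while you split it into two contrapositive inclusions.
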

\begin{proof}  We have that  $\lambda\in\closed{u(\mathbb D \setminus r\mathbb D)}$ for all $r\in(0,1)$  if and only if for all  $r\in(0,1)$ there is a sequence $(z_n)_{n=1}^\infty\subset\mathbb D$ 
such that $|z_n| \ge r$ for all $n\in\mathbb N$ and $|u(z_n) -\lambda| \to 0$ when $n \to\infty.$ Since $M_u-\lambda I = M_{u-\lambda}$, we can now apply lemmas \ref{IfFredholm_ForX} and \ref{ThenFredholm_ForX} to conclude that the last statement equivalently  means that   $M_u-\lambda I$ is not Fredholm, that is $\lambda\in\sigma_e(M_u). $ The use of Lemma \ref{ThenFredholm_ForX} is justified by the remark after condition {\ref{RelationXtoY}}.
\end{proof}

In {\bf  Examples} it was stated that the multiplier spaces for $A_{\alpha,\beta}^p(\D)$ with $p \geq 1, \ \alpha>-1 , \ \beta < \frac{1+\alpha}{p} $; $H_w^p(\D)$ with $p>1, \ w\in (A^p)$ and $\B_\alpha(\D)$ with $\alpha>1$ are $H^\infty(\D)$. Thus, we obtain the following results.

\begin{cor}\label{exampleOfEssSpectr}
In each of the following three cases:
\begin{enumerate}[label=\upshape(\alph*)]
\itemsep-0.6em 
\item $p\geq 1,\ \alpha>-1$ and $\beta < \frac{1+\alpha}{p}$ with $u\in M(A^p_{\alpha,\beta}(\D))$;\\
\item $\alpha>1$ with $u\in M(\B_\alpha(\D)  )$;\\
\item $p >1, \ w \in (A^p)$ with $u\in M(H_w^p(\D))$,
\end{enumerate}
the essential spectrum of $M_u$ is given by
\[
\sigma_e(M_u) = \bigcap_{0 < r < 1} \closed{u(\mathbb D \setminus r\mathbb D)}.
\]
\end{cor}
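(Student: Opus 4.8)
The plan is to derive Corollary \ref{exampleOfEssSpectr} as a direct application of Theorem \ref{EssSpecMlutiHInfty}. That theorem characterizes $\sigma_e(M_u) = \bigcap_{0<r<1}\closed{u(\D\setminus r\D)}$ for any space $X(\D)$ satisfying \ref{finerThanCOTop}, \ref{HinftySubsetMultiplY} and \ref{divideByPolynom}, provided $M(X(\D)) = H^\infty(\D)$. So the entire task reduces to verifying, in each of the three cases (a)--(c), that the relevant space $X(\D)$ falls under the hypotheses of that theorem. Crucially, the multiplier-space condition $M(X(\D)) = H^\infty(\D)$ has already been established for all three cases in \textbf{Examples}: for $A^p_{\alpha,\beta}(\D)$ with $\beta < \frac{1+\alpha}{p}$ in part \textbf{(b)} (via the isomorphism $A^p_{\alpha,\beta}(\D)\simeq A^p_{\alpha-\beta p,0}(\D)$ from (\ref{formimp})), for $\B_\alpha(\D)$ with $\alpha>1$ in part \textbf{(a)} (via Theorem 2.1 (iii) in \cite{OSZ}), and for $H^p_w(\D)$ with $w\in(A^p)$ in part \textbf{(c)}.

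First I would observe that the three cases are precisely those listed in the paragraph immediately preceding the corollary as having multiplier space equal to $H^\infty(\D)$, so no new verification of that identity is needed. Next I would confirm the three structural conditions. Condition \ref{finerThanCOTop} holds for $A^p_{\alpha,\beta}(\D)$ by the remark in \textbf{Examples} \textbf{(b)} (an appeal to Lemma 5.6 in \cite{BB}), for $\B_\alpha(\D)$ by the well-known statement in \textbf{Examples} \textbf{(a)}, and for $H^p_w(\D)$ by the proof of Lemma 2.1 in \cite{BGZ} cited in part \textbf{(c)}. Condition \ref{HinftySubsetMultiplY} is automatic once $M(X(\D)) = H^\infty(\D)$, since then one takes $Y(\D) = X(\D)$ (as noted in the remark after condition \ref{RelationXtoY} and in \textbf{Examples} \textbf{(c)}), giving $M(Y(\D)) = H^\infty(\D)$. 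Condition \ref{divideByPolynom} was verified for $A^p_{\alpha,\beta}(\D)$ through the explicit radial-derivative computation in \textbf{Examples} \textbf{(b)}, for $\B_\alpha(\D)$ as a consequence of Lemma \ref{dividingWithPolynomial} in \textbf{Examples} \textbf{(a)}, and for $H^p_w(\D)$ by the boundedness-near-the-boundary argument in \textbf{Examples} \textbf{(c)}.

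With all hypotheses of Theorem \ref{EssSpecMlutiHInfty} confirmed in each case, I would simply invoke that theorem to conclude $\sigma_e(M_u) = \bigcap_{0<r<1}\closed{u(\D\setminus r\D)}$. I do not anticipate a genuine obstacle here, since the corollary is a packaging result: the real work has already been done in establishing Theorem \ref{EssSpecMlutiHInfty} and in the condition-checking carried out in \textbf{Examples}. The only point requiring mild care is making explicit that condition \ref{RelationXtoY} is not needed, because it is irrelevant whenever $M(X(\D)) = H^\infty(\D)$ (per the remark after condition \ref{RelationXtoY}), which is exactly why Theorem \ref{EssSpecMlutiHInfty} omits it from its hypotheses and why the use of Lemma \ref{ThenFredholm_ForX} inside that theorem's proof remains justified.
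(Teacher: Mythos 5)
Your proposal is correct and is essentially the paper's own (implicit) proof: the paper obtains the corollary exactly as you do, by noting that in all three cases $M(X(\D))=H^\infty(\D)$ with conditions \ref{finerThanCOTop}, \ref{HinftySubsetMultiplY} (taking $Y(\D)=X(\D)$) and \ref{divideByPolynom} already checked in \textbf{Examples}, and then invoking Theorem \ref{EssSpecMlutiHInfty}. One minor citation slip: in case (a), where $\beta<\frac{1+\alpha}{p}$, the paper verifies \ref{divideByPolynom} not by the radial-derivative computation of \textbf{Examples} \textbf{(b)} (that computation is carried out under the standing assumption $\beta\ge\frac{1+\alpha}{p}$), but by viewing $A^p_{\alpha,\beta}(\D)\simeq A^p_{\alpha-\beta p,0}(\D)$ as a weighted Bergman space and using the boundedness argument for $\frac{1}{z-z_0}$ near the boundary in \textbf{Examples} \textbf{(c)}; either route establishes \ref{divideByPolynom}, so this does not affect correctness.
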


It was shown in Theorem \ref{EssSpecForMultiDim} that in higher dimensions, $n>1$, the essential spectra of multiplication operators coincide with their spectra for many spaces. This is seldom true for $n=1$. In corollaries \ref{examplesOfSpectr} and \ref{exampleOfEssSpectr} and Theorem \ref{MainResEssSpec}, we list some spaces, on which multiplier operators have the spectrum given by $ \closed{u(\mathbb B_n)}$ and the essential spectrum given by $\bigcap_{0 < r < 1} \closed{u(\mathbb B_n \setminus r\mathbb B_n)} $. Although the sets may differ, their spectral and essential spectral radii coincide according to the following remark.

\begin{rem} (a) Let $n\in\mathbb Z_{\geq 1}$. Since the decreasing sequence $\left(\closed{u(\mathbb B_n \setminus \left(1-\frac{1}{k})\mathbb B_n\right)}\right)_{k=2}^\infty$ consists of compact and connected sets, the intersection $\bigcap_{0 < r < 1} \closed{u(\mathbb B_n \setminus r\mathbb B_n)}$ is compact and connected.\\\\
(b) For  $n\in \mathbb Z_{\geq 1}$ and $u\in H^\infty (\mathbb B_n)$ we have 
\[
\sup \left\{  \abs{\lambda} : \lambda \in \bigcap_{0 < r < 1} \closed{u(\mathbb B_n \setminus r\mathbb B_n)} \right\}=  \norm{u}_\infty =\sup_{\lambda \in \closed{u(\mathbb B_n)}}    \abs{\lambda}. 
\]
Moreover, both suprema are attained. Clearly 
\[
\norm{u}_\infty = \sup_{z\in\mathbb B_n} \abs{u(z)} =  \sup_{\lambda\in u(\mathbb B_n)} \abs{\lambda} =  \sup_{\lambda\in \closed{u(\mathbb B_n)}} \abs{\lambda}\geq \sup \left\{  \abs{\lambda} : \lambda \in \bigcap_{0 < r < 1} \closed{u(\mathbb B_n \setminus r\mathbb B_n)} \right\}. 
\]
Furthermore, since $u\in H^\infty$ there is a sequence $(z_j)_{j=1}^\infty$ such that $z_j\in \mathbb B_n\setminus r_j\mathbb B_n$ and $\lim_{j\to\infty} \abs{u(z_j)}= \norm{u}_{\infty}$, where $r_j=1-j^{-1}$. The sequence $(u(z_j))_{j=1}^\infty$ is bounded, and therefore, by Bolzano - Weierstrass theorem, there is a convergent subsequence $(\lambda_k)_{k=1}^\infty$, where $\lambda_k = u(z_{j_k}) \in \closed{u(\mathbb B_n\setminus r_{j_k}\mathbb B_n)}$.  Since  the sets $U_k = \closed{u(\mathbb B_n\setminus r_{j_k}\mathbb B_n)}$ are compact and $U_{k+1}\subset U_k, \ k=1,2,...$, it holds that $\lim_{k\to\infty}\lambda_k = \lambda\in U_j$ for every $j$, and hence, we have $\lambda \in \bigcap_{0 < r < 1} \closed{u(\mathbb B_n \setminus r\mathbb B_n)}$ and $\abs{\lambda} = \norm{u}_\infty$.

\end{rem}

\bigskip

For $\xi\in\partial \mathbb D$ and $k\in\mathbb Z_{\geq 1}$, let $f_{\xi,k}:\mathbb D\to\mathbb D$ be a peak function defined by
\[
f_{\xi,k}(z) = \left(\frac{1+\conj{\xi}z}{2}\right)^k.
\]

For  $\alpha > 0 $ it is well-known that  
$\B_{0,\alpha}(\D)^{*} \simeq { A^1_0}(\mathbb D)$ and   ${A^1_0}(\mathbb D)^{*} \simeq \B_\alpha(\D)$ via an integral pairing, see \cite{Z}.

\begin{lem} \label{usefulBloch}  Let  $0< \alpha \le 1$, $\xi\in\D$, and $g_{\xi,k}(z)= \left( \frac{1+\conj{\xi}z}{2} \right)^k\norm{\left( \frac{1+\conj{\xi}z}{2} \right)^k}_{\B_\alpha}^{-1}$ be the normalized peak function.
Then we have $g^{(m)}_{\xi,k}\to 0, \ m\in\mathbb Z_{\geq 0}$ uniformly on every set $A_\delta= \{z\in\D: |z -\xi| \ge \delta\}$, $\delta>0,$ and $g_{\xi, k} \to  0$ weakly in $\B_\alpha(\D)$ as $k\to \infty$.
\end{lem}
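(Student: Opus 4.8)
The plan is to handle the two assertions in turn, the common quantitative input being two-sided control of $\norm{f_{\xi,k}}_{\B_\alpha}$, where $f_{\xi,k}(z)=\left(\tfrac{1+\conj{\xi}z}{2}\right)^k$ and $\abs{\xi}=1$ (the peak functions being defined on $\partial\D$). For the uniform convergence of the derivatives, I would first record the elementary identity
\[
\absB{\tfrac{1+\conj{\xi}z}{2}}^2=\tfrac{1+\abs{z}^2}{2}-\tfrac{\abs{z-\xi}^2}{4},
\]
which, since $\tfrac{1+\abs{z}^2}{2}\le 1$ on $\D$, shows that on $A_\delta$ one has $\bigl|\tfrac{1+\conj{\xi}z}{2}\bigr|\le\rho_\delta:=\sqrt{1-\delta^2/4}<1$. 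Because $z\mapsto \tfrac{1+\conj\xi z}{2}$ is affine, $f^{(m)}_{\xi,k}(z)=\tfrac{k!}{(k-m)!}\bigl(\tfrac{\conj\xi}{2}\bigr)^m\bigl(\tfrac{1+\conj\xi z}{2}\bigr)^{k-m}$, so $\abs{f^{(m)}_{\xi,k}(z)}\le k^m 2^{-m}\rho_\delta^{\,k-m}$ on $A_\delta$, i.e.\ exponential decay up to a polynomial factor.

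To pass from $f^{(m)}_{\xi,k}$ to the normalized $g^{(m)}_{\xi,k}=f^{(m)}_{\xi,k}/\norm{f_{\xi,k}}_{\B_\alpha}$, the essential step is the lower bound $\norm{f_{\xi,k}}_{\B_\alpha}\gtrsim k^{1-\alpha}$. I would obtain this by estimating the Bloch seminorm from below at the single radial point $z_0=\xi(1-\tfrac1k)$: there $(1-\abs{z_0}^2)^\alpha\asymp k^{-\alpha}$ and $\abs{f'_{\xi,k}(z_0)}=\tfrac{k}{2}\bigl(1-\tfrac{1}{2k}\bigr)^{k-1}\asymp k$, whence $(1-\abs{z_0}^2)^\alpha\abs{f'_{\xi,k}(z_0)}\gtrsim k^{1-\alpha}$. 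Combining the two bounds yields $\abs{g^{(m)}_{\xi,k}(z)}\lesssim k^{m-1+\alpha}\rho_\delta^{\,k}\to 0$ uniformly on $A_\delta$, since the exponential factor dominates. (No sharp asymptotic for the norm is needed here, only this uniform-in-$k$ lower bound.)

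For the weak convergence, the structural observation is that each $f_{\xi,k}$ is a polynomial, hence lies in the little Bloch-type space $\B_{0,\alpha}(\D)$, so $g_{\xi,k}\in\B_{0,\alpha}(\D)$. For a sequence contained in the closed subspace $\B_{0,\alpha}$, weak convergence in $\B_\alpha$ is equivalent to convergence tested against $\B_{0,\alpha}^\ast\simeq A^1_0(\D)$: every $\phi\in\B_\alpha^\ast$ restricts to an element of $\B_{0,\alpha}^\ast$, and conversely every element of $\B_{0,\alpha}^\ast$ extends by Hahn–Banach. Thus it suffices to prove that $(g_{\xi,k})$ converges to $0$ in the weak$^\ast$ topology $\sigma(\B_\alpha,A^1_0)$. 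I would then use the separability of $A^1_0$: the norm-bounded sequence ($\norm{g_{\xi,k}}_{\B_\alpha}=1$) lies in a weak$^\ast$-metrizable, weak$^\ast$-sequentially compact ball. Any weak$^\ast$ cluster point $g$ satisfies $g(z)=\lim_k g_{\xi,k}(z)$ for each $z\in\D$—point evaluations being weak$^\ast$-continuous since they are represented in the predual by the Bergman reproducing kernel—and the first assertion with $m=0$ gives $g_{\xi,k}\to 0$ uniformly on compact subsets of $\D$; hence $g\equiv 0$. A standard subsequence argument then forces $g_{\xi,k}\to 0$ weak$^\ast$, equivalently weakly.

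The main obstacle is the uniform lower bound $\norm{f_{\xi,k}}_{\B_\alpha}\gtrsim k^{1-\alpha}$, which is exactly what makes the normalized derivatives collapse on $A_\delta$; once it is in hand, the first assertion is a routine exponential-versus-polynomial comparison, and the weak part becomes essentially soft, the little-Bloch membership reducing weak convergence to the weak$^\ast$ statement that follows from compact-open convergence together with boundedness.
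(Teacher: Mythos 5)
Your proposal is correct, and it follows the same skeleton as the paper's proof: uniform decay on $A_\delta$ from the bound $\absb{(1+\conj{\xi}z)/2}\le\rho_\delta<1$ combined with a lower bound for $\norm{f_{\xi,k}}_{\B_\alpha}$, and weak convergence obtained by combining $\mathcal P(\D)\subset\B_{0,\alpha}(\D)$ with the dualities $\B_{0,\alpha}(\D)^*\simeq A^1_0(\D)$ and $A^1_0(\D)^*\simeq\B_\alpha(\D)$, so that everything reduces to a weak$^*$ null statement. You differ from the paper in two places, both in the direction of more detail. First, where the paper merely asserts the two-sided asymptotic $\norm{f_{\xi,k}}_{\B_\alpha}\asymp k^{1-\alpha}$ and calls the uniform convergence a consequence of the definition, you prove the lower bound $\norm{f_{\xi,k}}_{\B_\alpha}\gtrsim k^{1-\alpha}$ by testing the seminorm at $z_0=\xi(1-1/k)$, and you correctly observe that this one-sided bound is all the lemma needs, since normalization gives $\norm{g_{\xi,k}}_{\B_\alpha}=1$ for free. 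Second, where the paper obtains the weak$^*$ convergence by citing Lemma 3.1 of \cite{CPPR}, you give a self-contained argument: Banach--Alaoglu plus separability of $A^1_0(\D)$ make the dual ball weak$^*$-metrizable and sequentially compact, every weak$^*$ cluster point vanishes because point evaluations are weak$^*$-continuous and $g_{\xi,k}\to0$ uniformly on compacta (every compact subset of $\D$ lies in some $A_\delta$ since $\xi\in\partial\D$), and a subsequence argument concludes; in essence you reprove the cited lemma, which buys self-containedness at the cost of length. The two facts you take as standard --- weak$^*$-continuity of $\delta_z$ under the integral pairing, and compatibility of the two dualities, i.e.\ that the canonical embedding $\B_{0,\alpha}(\D)\hookrightarrow\B_{0,\alpha}(\D)^{**}\simeq\B_\alpha(\D)$ is the inclusion --- are invoked by the paper at the same level of informality, so they do not constitute a gap. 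You also correctly read $\xi\in\partial\D$, as in the paper's definition of the peak functions, in place of the statement's typo $\xi\in\D$.
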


\begin{proof}   For the Bloch-type spaces  $\B_\alpha(\D)$,  it can be shown that 
\[
 \norm{\left( \frac{1+\conj{\xi}z}{2} \right)^k}_{\B_\alpha} \asymp k^{1-\alpha}.
\]
The property $g^{(m)}_{\xi,k}\to 0, \ m\in\mathbb Z_{\geq 0},$ uniformly 
on the  sets $A_\delta$  as $k\to \infty$ is a consequence of the definition of $g_{\xi,k}$. Moreover, the  sequence $(g_{\xi,k})_{k=1}^\infty$ is a $weak^*$ null sequence  by  using Lemma 3.1 in \cite{CPPR}. Since $(g_{\xi,k})_k \subset \mathcal P(\mathbb D)\subset \B_{0,\alpha}(\mathbb D)$,  we conclude that $g_{\xi, k}\to 0$ weakly when $k\to\infty$.

\end{proof}

\begin{lem}\label{IfFredholm_BlochType}
 Let us assume that either $u\in M(\B(\D))\cap  A(\mathbb D)$   or $u\in M(\B_\alpha(\D))=\mathcal B_\alpha(\D)$ with $0 < \alpha <1.$
If $M_u\colon \B_\alpha(\D) \to \B_\alpha(\D)$ is Fredholm, then there are $r\in (0,1)$ and $\delta>0$ such that $|u(z)| \ge \delta$ for all $r\le |z| < 1.$
\end{lem}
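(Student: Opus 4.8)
The plan is to prove the contrapositive. Suppose that no pair $r\in(0,1)$, $\delta>0$ with $|u(z)|\ge\delta$ on $r\le|z|<1$ exists; then there is a sequence $(z_n)\subset\D$ with $|z_n|\to 1$ and $u(z_n)\to 0$. In both cases of the hypothesis $u\in A(\D)$ (for $0<\alpha<1$ because $\B_\alpha(\D)\subset A(\D)$, and for $\alpha=1$ by assumption), so after passing to a subsequence we may take $z_n\to\xi\in\partial\D$, and continuity forces $u(\xi)=0$. I would then exploit the normalized peak functions $g_{\xi,k}$ of Lemma \ref{usefulBloch}, which satisfy $\norm{g_{\xi,k}}_{\B_\alpha}=1$, tend to $0$ weakly in $\B_\alpha(\D)$, and whose derivatives $g_{\xi,k}^{(m)}$ vanish uniformly on each $A_\delta=\{z\in\D:|z-\xi|\ge\delta\}$. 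The strategy is to show $\norm{M_u g_{\xi,k}}_{\B_\alpha}\to 0$ and then contradict Fredholmness with a parametrix.

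The decisive step is the estimate $\norm{M_u g_{\xi,k}}_{\B_\alpha}\to 0$, and this is where I expect the real work to lie. Writing $(ug_{\xi,k})'=u'g_{\xi,k}+ug_{\xi,k}'$, I would bound $(1-|z|^2)^\alpha|(ug_{\xi,k})'(z)|$ by splitting $\D$ into $A_\delta$ and its complement. On $A_\delta$ both $g_{\xi,k}$ and $g_{\xi,k}'$ tend to $0$ uniformly, so—using $(1-|z|^2)^\alpha|u'|\le\norm{u}_{\B_\alpha}$ and $|u|\le\norm{u}_\infty$—this part tends to $0$ as $k\to\infty$. On $\{|z-\xi|<\delta\}$, which forces $1-|z|<\delta$, the factor $u$ is small: given $\eps>0$, choose $\delta$ with $|u|<\eps$ there, so the term $(1-|z|^2)^\alpha|u|\,|g_{\xi,k}'|\le\eps\norm{g_{\xi,k}}_{\B_\alpha}=\eps$. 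For the remaining term $(1-|z|^2)^\alpha|u'|\,|g_{\xi,k}|$ I would use $\norm{g_{\xi,k}}_\infty\lesssim k^{\alpha-1}$, which follows from $|f_{\xi,k}|\le 1$ and $\norm{f_{\xi,k}}_{\B_\alpha}\asymp k^{1-\alpha}$: when $0<\alpha<1$ this factor tends to $0$ and kills the term, whereas when $\alpha=1$ one instead invokes $u\in\B_{0,1}(\D)$, so after shrinking $\delta$ one has $(1-|z|^2)|u'(z)|<\eps$ on $\{|z-\xi|<\delta\}$ while $\norm{g_{\xi,k}}_\infty$ stays bounded. Since also $g_{\xi,k}(0)=2^{-k}\norm{f_{\xi,k}}_{\B_\alpha}^{-1}\to 0$, letting first $k\to\infty$ and then $\eps\to 0$ gives $\norm{M_u g_{\xi,k}}_{\B_\alpha}\to 0$.

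Finally I would close with the Fredholm hypothesis. If $M_u$ is Fredholm there is a bounded operator $S$ and a finite-rank (hence compact) operator $K$ with $SM_u=I-K$. As $g_{\xi,k}\to 0$ weakly and $K$ is completely continuous, $\norm{Kg_{\xi,k}}_{\B_\alpha}\to 0$; combining this with the previous step yields $1=\norm{g_{\xi,k}}_{\B_\alpha}=\norm{SM_u g_{\xi,k}+Kg_{\xi,k}}_{\B_\alpha}\le\norm{S}\,\norm{M_u g_{\xi,k}}_{\B_\alpha}+\norm{Kg_{\xi,k}}_{\B_\alpha}\to 0$, a contradiction. The main obstacle is the norm estimate of the middle paragraph, and within it the case $\alpha=1$: there $\norm{g_{\xi,k}}_\infty$ does not vanish, so the control of $(1-|z|^2)|u'|\,|g_{\xi,k}|$ near the peak point must come entirely from the little-Bloch decay of the multiplier, which is precisely the role played by the hypothesis $u\in M(\B(\D))\cap A(\D)$ forcing $u\in\B_{0,1}(\D)$.
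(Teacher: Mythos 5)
Your proposal is correct and follows essentially the same route as the paper: the contrapositive setup with $z_n\to\xi$, the normalized peak functions $g_{\xi,k}$ of Lemma \ref{usefulBloch}, the splitting of $\D$ into $A_\delta$ and $B_\delta$, and the same case distinction (using $\norm{g_{\xi,k}}_\infty\lesssim k^{\alpha-1}\to 0$ for $0<\alpha<1$, and the little-Bloch decay of $u'$ forced by $u\in M(\B(\D))$ when $\alpha=1$). The only cosmetic difference is the last step: where the paper cites Lemma 4.3.15 of \cite{D} to conclude $0\in\sigma_e(M_u)$ from $\norm{M_ug_{\xi,k}}_{\B_\alpha}\to 0$ and weak nullity, you prove that criterion directly via Atkinson's parametrix and complete continuity of compact operators, which is a valid inlined proof of the same fact.
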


\begin{proof} Suppose there  is a sequence $(z_k)_{k=1}^\infty \subset\mathbb D$ such that $|z_k|\to 1$ and $|u(z_k)| \to 0$ when $k\to\infty.$ Then, by going to a subsequence if necessary, we can assume that $z_k\to\xi\in\partial\mathbb D$ when $k\to\infty.$  Since $u$ is continuous  up to the boundary in both cases, $u(\xi)=0.$ Now by  Lemma \ref{usefulBloch} it holds that $g_{\xi,k}\to 0,$ $g'_{\xi,k}\to 0$ uniformly on every set $A_\delta= \{z\in\D: |z -\xi| \ge \delta\}$, $\delta>0,$ and $g_{\xi, k} \to  0$ weakly as 
$k\to \infty$. 
We consider the two cases: $(i)$ when $\alpha =1$ and $(ii)$ when $0 < \alpha <1.$

\smallskip

$(i)$ It holds that $\sup_{k\in \mathbb Z_{\geq 1}}||g_{\xi,k}||_\infty<\infty.$  Since $u\in  M(\B(\D))$, we know that $u\in \B_0(\D)\cap H^\infty(\D).$   Let $B_\delta= \{z\in\D: |z -\xi| < \delta\},$ so $\D = A_\delta \cup B_\delta.$  Let $\varepsilon> 0$ be given, and  choose $\delta >0$ such that $|u(z)| < \varepsilon$ and $|u'(z)| (1 -|z|^2) < \varepsilon$ for $z\in B_\delta.$  The following estimates hold,

\begin{align*}
\norm{M_u(g_{\xi,k})}_\B   & \le   I_{k,{A_\delta}} + { II}_{k,{B_\delta}}+\abs{g_{\xi,k}(0)u(0)},
\end{align*}
where
\begin{align*}
 I_{k,{A_\delta}}&=\sup_{z\in A_\delta}|u(z)| |g'_{\xi,k}(z)| (1 - |z|^2)+\sup_{z\in A_\delta}|u'(z)| |g_{\xi,k}(z)| (1 - |z|^2)  \text{ and }\\
{ II}_{k,{B_\delta}} &=\sup_{z\in B_\delta}|u(z)| |g'_{\xi,k}(z)| (1 - |z|^2) +  \sup_{z\in B_\delta}|u'(z)| |g_{\xi,k}(z)| (1 - |z|^2).
\end{align*}
Consequently, we get that $\lim_{k\to\infty}  I_{k,{A_\delta}} =0$ and  $\lim_{k\to\infty} { II}_{k,{B_\delta}} \le  2\varepsilon.$ We also have $\abs{g_{\xi,k}(0)} \asymp 2^{-k}$.
Thus $\norm{{M_u}(g_{\xi,k})}_\B\to 0 \ \text{when} \ k\to\infty,$  which means by  Lemma 4.3.15 in \cite{D} that  $0\in\sigma_e(M_u)$. Therefore $M_u$ is not Fredholm. 

\smallskip

$(ii)$ The result follows similarily from showing that $\norm{u  g_{\xi, k}}_{\B_\alpha} \to  0$ as $k\to \infty$. Take $\varepsilon>0$ and choose $\delta>0$ such that $\abs{u(z)}<\varepsilon$ on $B_\delta$. It is clear that $I_{k,{A_\delta}}\to 0$ as $k\to \infty$. From the definition of $g_{\xi,k}$  we have $\norm{g_{\xi,k}}_\infty \asymp k^{\alpha-1} $, so $\norm{g_{\xi,k}}_\infty\to 0$ as $k\to \infty$, hence, ${ II}_{k,{B_\delta}}<2\varepsilon$ for $k$ large enough.

\end{proof}

Let us now consider the space $X(\D) = A_{\alpha,\beta}^p(\D)$  with $1 < p <\infty$. The following lemma will be used to obtain an estimate for the Bergman-Sobolev norm of the peak function.

\begin{lem}\label{someAsympBehav} 
Let $L,M\geq 0$. Then
\[
\frac{\Gamma(K+L)}{\Gamma(K)}  \sim K^L\text{ and } \frac{\Gamma(2K+L)}{\Gamma(K+L)\Gamma(K+M)}\sim \frac{2^{2K+L-1}}{\sqrt{\pi} }K^{\frac{1}{2}-M},
\]
as $K\to \infty$.
\end{lem}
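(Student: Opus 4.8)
The plan is to prove both asymptotic relations using Stirling's formula, $\Gamma(x) \sim \sqrt{2\pi}\, x^{x-1/2} e^{-x}$ as $x\to\infty$, which reduces each quotient of Gamma functions to an elementary limit. For the first relation, I would write
\[
\frac{\Gamma(K+L)}{\Gamma(K)} \sim \frac{\sqrt{2\pi}\,(K+L)^{K+L-1/2} e^{-(K+L)}}{\sqrt{2\pi}\,K^{K-1/2} e^{-K}} = (K+L)^{K+L-1/2} K^{-(K-1/2)} e^{-L},
\]
and then rearrange the right-hand side as $K^L \left(1 + \frac{L}{K}\right)^{K+L-1/2} e^{-L}$. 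The factor $\left(1 + \frac{L}{K}\right)^{K+L-1/2}$ tends to $e^{L}$ as $K\to\infty$ (using $\left(1+\frac{L}{K}\right)^K \to e^L$ and the boundedness of the remaining exponent contribution), which cancels the $e^{-L}$ and leaves $K^L$, giving the claimed equivalence.

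For the second relation I would apply Stirling's formula to all three Gamma factors simultaneously. The numerator $\Gamma(2K+L)$ contributes a factor $(2K+L)^{2K+L-1/2} e^{-(2K+L)}$ and the two denominator factors contribute $(K+L)^{K+L-1/2}(K+M)^{K+M-1/2} e^{-(2K+L+M)}$, together with the $\sqrt{2\pi}$ bookkeeping that collapses to a single $1/\sqrt{2\pi}$. I would factor out the dominant powers of $2$ and $K$: writing $2K+L = 2K(1 + \frac{L}{2K})$, $K+L = K(1+\frac{L}{K})$, and $K+M = K(1+\frac{M}{K})$, the leading power-of-two is $2^{2K+L-1/2}$ from the numerator, the pure powers of $K$ combine to $K^{(2K+L-1/2) - (K+L-1/2) - (K+M-1/2)} = K^{1/2 - M}$, and all the $e$ and $\left(1+\frac{c}{K}\right)^{\,\cdot}$ correction factors must be checked to converge to a finite nonzero constant.

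The main obstacle, and the step deserving the most care, is tracking the convergence of the product of the three correction factors of the form $\left(1+\frac{c}{K}\right)^{aK+b}$ together with the exponential factor $e^{M}$ (which arises from the mismatched linear terms in the exponents) and confirming that their net contribution is exactly $\frac{1}{\sqrt{2}}$, so that combined with $2^{2K+L-1/2}$ and the $\frac{1}{\sqrt{2\pi}}$ prefactor one recovers precisely $\frac{2^{2K+L-1}}{\sqrt{\pi}}$. A cleaner route that sidesteps some of this bookkeeping is to invoke the first relation twice: since $\frac{\Gamma(K+L)}{\Gamma(K)} \sim K^L$ and similarly $\frac{\Gamma(K+M)}{\Gamma(K)}\sim K^M$, one can reduce the second quotient to $\frac{\Gamma(2K+L)}{\Gamma(K)^2} K^{-L-M}$ up to asymptotic equivalence, and then handle $\frac{\Gamma(2K+L)}{\Gamma(K)^2}$ via the Legendre duplication formula $\Gamma(2K) = \frac{2^{2K-1}}{\sqrt{\pi}}\Gamma(K)\Gamma(K+\tfrac12)$ combined once more with the first relation to insert the shift $L$ and the half-integer shift. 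I would likely adopt this duplication-formula approach, as it isolates the source of the $2^{2K}/\sqrt{\pi}$ factor transparently and leaves only first-relation-type limits to verify.
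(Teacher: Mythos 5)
Your adopted route (the duplication-formula one) is correct but genuinely different from the paper's. The paper proves both relations the same way: it applies Stirling's formula $\Gamma(x)\sim\sqrt{2\pi/x}\,(x/e)^x$ to every Gamma factor, divides by the claimed asymptotic expression, and verifies directly that the resulting ratio
\[
e^M\sqrt{\frac{\bigl(1+\frac{L}{K}\bigr)\bigl(1+\frac{M}{K}\bigr)}{1+\frac{L}{2K}}}\;
\frac{\bigl(1+\frac{L}{2K}\bigr)^{2K+L}}{\bigl(1+\frac{L}{K}\bigr)^{K+L}\bigl(1+\frac{M}{K}\bigr)^{K+M}}
\]
tends to $1$; that is, it is exactly your direct-Stirling sketch carried out to the end. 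Your preferred alternative instead reduces the second relation to the first: $\Gamma(K+L)\sim K^L\Gamma(K)$, $\Gamma(K+M)\sim K^M\Gamma(K)$, $\Gamma(2K+L)\sim(2K)^L\Gamma(2K)$, and then the exact identity $\Gamma(2K)=\frac{2^{2K-1}}{\sqrt{\pi}}\Gamma(K)\Gamma(K+\tfrac12)$ together with $\Gamma(K+\tfrac12)\sim K^{1/2}\Gamma(K)$ gives $\frac{2^{2K+L-1}}{\sqrt{\pi}}K^{1/2-M}$ at once. This buys transparency: the constant $2^{2K}/\sqrt{\pi}$ enters through an identity rather than through asymptotic bookkeeping, and the only limits used are instances of the first relation. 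The price is invoking the duplication formula, where the paper needs nothing beyond Stirling.

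One arithmetic correction to your direct-Stirling sketch: the product of the three correction factors $\bigl(1+\frac{c}{K}\bigr)^{aK+b}$ together with $e^M$ tends to $1$, not to $\frac{1}{\sqrt{2}}$, since the numerator factor contributes $e^L$ while the denominator factors contribute $e^L e^M$, and $e^M\cdot e^L/(e^Le^M)=1$. No extra $\frac{1}{\sqrt{2}}$ is needed or available: already $\frac{2^{2K+L-1/2}}{\sqrt{2\pi}}=\frac{2^{2K+L-1}}{\sqrt{\pi}}$, whereas your claimed combination $\frac{1}{\sqrt{2}}\cdot 2^{2K+L-1/2}\cdot\frac{1}{\sqrt{2\pi}}$ equals $\frac{2^{2K+L-3/2}}{\sqrt{\pi}}$ and would contradict the lemma. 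This is a slip in the anticipated constant, not in the method; carried out carefully, that route closes exactly as in the paper.
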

\begin{proof}
According to Stirling's approximation, $\Gamma(x)\sim \sqrt{\frac{2\pi}{x}}\left(\frac{x}{e}\right)^x$ as $x\to\infty$, we have
\begin{align*}
\frac{\Gamma(K+L)}{K^L\Gamma(K)} &\sim  \sqrt{\frac{K}{K+L}} e^{K-(K+L)}\frac{(K+L)^{K+L}}{K^L K^K} \\
&=\left( 1+\frac{L}{K} \right)^{-\frac{1}{2}}  e^{-L} \left(1+\frac{L}{K}\right)^K\left(1+\frac{L}{K}\right)^L \rightarrow 1
\end{align*}
as $K\to\infty$. Moreover, 
\begin{align*}
\frac{\sqrt{\pi }K^{M-\frac{1}{2}}\Gamma(2K+L)}{2^{L+2K-1}\Gamma(K+L)\Gamma(K+M)} &\sim  \frac{   \sqrt{\pi}K^{M-\frac{1}{2}}     }{    2^{L+2K-1}     } \sqrt{\frac{(K+L)(K+M)}{2\pi(2K+L)}} \frac{e^M(2K+L)^{2K+L}}{(K+L)^{K+L} (K+M)^{K+M}} \\
&=e^M \sqrt{\frac{\left(1+\frac{L}{K}\right)\left(1+\frac{M}{K}\right)}{\left(1+\frac{L}{2K}\right)}}  \frac{\left(1+\frac{L}{2K}\right)^{2K+L}
}{\left(1+\frac{L}{K}\right)^{K+L}\left(1+\frac{M}{K}\right)^{K+M}}\to 1
\end{align*}
as $K\to\infty$.
\end{proof}

In the following important lemma a fairly good approximation of the behaviour of the Bergman-Sobolev norm of the peak functions is obtained. The proof also gives an exact asymptotic formula for $\norm{D^j f_{\xi,k}}_{A^p_\alpha}$ as $k\to \infty$ in the case of $p\in\mathbb Z_{\geq 1}$, namely, 
\[
\norm{D^j f_{\xi,k}}_{A^p_\alpha}^p\sim \frac{ \Gamma(\alpha+2) 2^{2\alpha+\frac{5}{2}-jp}}{   \sqrt{ \pi }   p^{\alpha+\frac{3}{2}}         }(k+1)^{jp-(\alpha+\frac{3}{2})}. 
\]
Furthermore, some properties for the normalized peak function are given in order to prove Lemma \ref{IfFredholm_BergmanSobolev}, from which a part of the main result follows. Observe that, as already mentioned in the beginning of section \ref{sect4}, the following lemma is a necessary refinement of Lemma 11  in \cite{CHZ} and, as a sharp estimate, it is also of independent interest. 
\begin{lem} \label{bra} 
Let $p \geq 1$, $\alpha > -1$ or $p = 2$, $\alpha =-1$. If $\beta\geq 0$, then
\[
\norm{f_{\xi,k}}_{A_{\alpha,\beta}^p}^p \asymp ( k+1)^{-\alpha + \beta p  -\frac{3}{2}}
\] 
for $k\in\mathbb Z$ large enough.
Consequently, if $\beta > \frac{2 +\alpha}{p}$ and $\xi\in \partial \mathbb D$, then the functions $g_{\xi, k} = f_{\xi,k} / \norm{f_{\xi,k}}_{A_{\alpha,\beta}^p} \in{\mathcal P}(\D)$  have the properties that $\norm{g_{\xi, k}}_{A_{\alpha,\beta}^p}=1$; $g_{\xi,k}\to 0$;  $R^mg_{\xi,k}\to 0$, $m\in\mathbb Z_{\geq 1}$,  uniformly on every set $A_\delta= \{z\in\D: |z -\xi| \ge \delta\}$, $\delta>0,$ and for $p>1$ it also holds that $g_{\xi, k} \to  0$ weakly in $A_{\alpha,\beta}^p$ as $k\to \infty$.
\end{lem}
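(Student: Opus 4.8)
The plan is to evaluate $\norm{f_{\xi,k}}_{A_{\alpha,\beta}^p}$ by reducing it to a single radial integral and then performing a sharp boundary-concentration estimate on that integral. Since $dA_\alpha$, $dS$ and the operator $(I+R)^\beta$ are all rotation invariant, the map $z\mapsto\conj\xi z$ is an isometry carrying $f_{1,k}$ to $f_{\xi,k}$, so I may assume $\xi=1$ and write $f_k(z)=\left(\frac{1+z}{2}\right)^k$, for which $D^jf_k(z)=\frac{k!}{(k-j)!}2^{-j}\left(\frac{1+z}{2}\right)^{k-j}$. For the two-sided estimate with arbitrary real $p\geq1$, I would fix an integer $N>\beta$ and set $\gamma=\alpha+p(N-\beta)>-1$, so that $A_{\alpha,\beta}^p(\D)\simeq A_{\gamma,N}^p(\D)$ by (\ref{formimp}) and, by (\ref{good}),
\[
\norm{f_k}_{A_{\alpha,\beta}^p}\asymp\norm{f_k}_{A_{\gamma,N}^p}\asymp\sum_{l=0}^{N-1}\abs{D^lf_k(0)}+\norm{D^Nf_k}_{A_\gamma^p}.
\]
Because $\abs{D^lf_k(0)}=\frac{k!}{(k-l)!}2^{-k}$ decays exponentially in $k$ while the last term decays or grows only polynomially, the point-evaluation terms are negligible and $\norm{f_k}_{A_{\alpha,\beta}^p}\asymp\norm{D^Nf_k}_{A_\gamma^p}$. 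Writing $\norm{D^Nf_k}_{A_\gamma^p}^p=\left(\frac{k!}{(k-N)!}\right)^p2^{-Np}\int_{\D}\left|\frac{1+z}{2}\right|^{(k-N)p}dA_\gamma(z)$ and using $\left(\frac{k!}{(k-N)!}\right)^p\sim k^{Np}$ from the first asymptotic in Lemma \ref{someAsympBehav}, everything reduces to the claim $\int_{\D}\left|\frac{1+z}{2}\right|^s\,dA_\mu(z)\asymp s^{-(\mu+\frac32)}$ as $s\to\infty$; taking $\mu=\gamma$ and $s=(k-N)p\sim kp$, together with $\gamma=\alpha+p(N-\beta)$, turns the exponent $Np-(\gamma+\frac32)$ into $p\beta-\alpha-\frac32$, which is exactly the asserted $\asymp(k+1)^{-\alpha+\beta p-\frac32}$.

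The heart of the matter is thus this single integral. The function $\left|\frac{1+z}{2}\right|$ attains its maximum $1$ on $\closed{\D}$ only at $z=1$, so as $s\to\infty$ the mass concentrates there and a Laplace-type analysis applies: near $z=re^{\I\theta}=1$ one has $\log\left|\frac{1+z}{2}\right|\approx-\tfrac12(1-r)-\tfrac18\theta^2$ and $(1-\abs{z}^2)^\mu\approx(2(1-r))^\mu$, whence the integral factors asymptotically into a $\Gamma$-integral in the radial variable (of order $s^{-(\mu+1)}$) and a Gaussian in $\theta$ (of order $s^{-1/2}$), giving the order $s^{-(\mu+\frac32)}$ uniformly for real $s$. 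To obtain the sharp constant when $p$ is a positive integer I would instead compute the integral exactly: for $s=2K$ even one writes $\left|\frac{1+z}{2}\right|^{2K}=\bigl|\left(\frac{1+z}{2}\right)^{K}\bigr|^2$ and uses orthogonality of the monomials against $dA_\mu$ (monomials being orthonormal in $L^2(dS)$ when $\mu=-1$) to get $\int_{\D}\left|\frac{1+z}{2}\right|^{2K}dA_\mu=\Gamma(\mu+2)2^{-2K}\sum_{m=0}^{K}\binom{K}{m}^2\frac{\Gamma(m+1)}{\Gamma(m+\mu+2)}$; the sum is dominated by $m$ near $K/2$, where the slowly varying factor $\frac{\Gamma(m+1)}{\Gamma(m+\mu+2)}\sim(K/2)^{-(\mu+1)}$ may be pulled out, so it is asymptotic to $(K/2)^{-(\mu+1)}\sum_m\binom{K}{m}^2=(K/2)^{-(\mu+1)}\binom{2K}{K}$, and the estimate $\binom{2K}{K}=\frac{\Gamma(2K+1)}{\Gamma(K+1)^2}\sim\frac{4^K}{\sqrt{\pi K}}$ from the second formula of Lemma \ref{someAsympBehav} yields $\int_{\D}\left|\frac{1+z}{2}\right|^sdA_\mu\sim\frac{\Gamma(\mu+2)2^{2\mu+5/2}}{\sqrt\pi}s^{-(\mu+3/2)}$. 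Substituting $\mu=\alpha$ and $s=(k-j)p$ and combining with the $\Gamma$-ratio recovers the displayed exact asymptotic for $\norm{D^jf_{\xi,k}}_{A_\alpha^p}^p$.

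For the consequence, assume $\beta>\frac{2+\alpha}{p}$ and $\xi\in\partial\D$; then $\norm{f_{\xi,k}}_{A_{\alpha,\beta}^p}>0$ and $\norm{g_{\xi,k}}_{A_{\alpha,\beta}^p}=1$ by construction, with $\norm{f_{\xi,k}}_{A_{\alpha,\beta}^p}^{-1}\asymp(k+1)^{(\alpha-\beta p+3/2)/p}$ of polynomial order. On $A_\delta=\{z\in\D:\abs{z-\xi}\geq\delta\}$ the parallelogram identity $\abs{\xi+z}^2+\abs{\xi-z}^2=2(1+\abs{z}^2)\leq4$ gives $\left|\frac{1+\conj\xi z}{2}\right|=\frac{\abs{\xi+z}}{2}\leq\sqrt{1-\delta^2/4}=:c<1$ uniformly, so the formula $D^mf_{\xi,k}(z)=\frac{k!}{(k-m)!}(\conj\xi/2)^m\left(\frac{1+\conj\xi z}{2}\right)^{k-m}$ gives $\abs{D^mf_{\xi,k}(z)}\lesssim k^mc^k$; since $R=zD$ in one variable, the same geometric bound holds for $R^mf_{\xi,k}$, and dividing by the polynomially small normalizer shows $R^mg_{\xi,k}\to0$ uniformly on every $A_\delta$. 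In particular $g_{\xi,k}\to0$ pointwise on $\D$, as each fixed $z\in\D$ lies in some $A_\delta$. Finally, for $p>1$ the space $A_{\alpha,\beta}^p(\D)$ is reflexive, so the bounded sequence $(g_{\xi,k})$ has weakly convergent subsequences; since the evaluation functionals $\delta_z$ are continuous by \ref{finerThanCOTop} and $g_{\xi,k}(z)\to0$ for every $z\in\D$, every weak cluster point is $0$, and hence $g_{\xi,k}\to0$ weakly.

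I expect the decisive obstacle to be the sharp single-variable integral. Producing the exact power $s^{-(\mu+3/2)}$---and, for integer $p$, the precise constant---requires controlling the spread of the binomial sum about $m=K/2$ (a local central limit estimate) or, equivalently, a Laplace expansion with genuine error control rather than the heuristic above; and because the exact moment identity is unavailable for non-integer $p$, the general real case must be handled through the Laplace estimate (or by sandwiching the integrand between integer exponents), which is the technically delicate point.
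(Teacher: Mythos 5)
Your proposal is correct in substance and follows the same skeleton as the paper's proof: rotate to $\xi=1$, use (\ref{formimp}) and (\ref{good}) to reduce everything to $\norm{D^Nf_{\xi,k}}_{A^p_\gamma}$ with $\gamma=\alpha+p(N-\beta)$, pull out the prefactor $\bigl(\tfrac{k!}{(k-N)!}\bigr)^p\sim k^{Np}$, and estimate the single integral $\int_\D\abs{(1+z)/2}^s\,dA_\gamma(z)$; your handling of the consequences is also sound (geometric decay on $A_\delta$ via the parallelogram bound, and weak null convergence from reflexivity plus pointwise convergence, where the paper instead packages the same idea as a homeomorphism between the weak and pointwise topologies on the closed unit ball). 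The genuine divergence is at the decisive integral. The paper does exactly what you list as your fallback --- sandwich the non-integer exponent $p$ between integer exponents (its inequalities $(*)$ and $(**)$, with a parity adjustment in the definition of $K$) and expand $\abs{1+\conj{\xi}z}^{2K}$ by Parseval --- but it then evaluates the resulting sum $\sum_{n=0}^K\binom{K}{n}^2\frac{\Gamma(n+1)}{\Gamma(n+\gamma+2)}$ \emph{in closed form} via the Chu--Vandermonde identity, obtaining a quotient of Gamma functions whose asymptotics are read off from Stirling (Lemma \ref{someAsympBehav}). That identity is precisely the ingredient that dissolves what you correctly flag as the decisive obstacle: no local central limit estimate, no control of the spread of $\binom{K}{n}^2$ about $n=K/2$, and no Laplace-type error analysis are needed, because the sum is exact. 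For what it is worth, your concentration heuristic does produce the right answer --- $(K/2)^{-(\gamma+1)}\binom{2K}{K}$ agrees asymptotically with $\Gamma(2K+\gamma+2)/\Gamma(K+\gamma+2)^2$ --- and your Laplace route would yield the power $s^{-(\gamma+3/2)}$ for arbitrary real $s$ without any parity tricks, which is conceptually more flexible; but as written both of your routes leave the sharp estimate as an admitted gap, whereas the identity-based route closes it with nothing beyond Stirling. If you complete your write-up by replacing the ``pull out the slowly varying factor'' step with Chu--Vandermonde, your argument becomes essentially the paper's proof.
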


\begin{proof} 
Let  $N$ be a positive integer satisfying $N > \beta- \frac{\frac{1}{2} +\alpha}{p}$. By (\ref{formimp}) and   (\ref{good}) we have that 
\begin{equation*}
\norm{f_{\xi,k}}_{A_{\alpha,\beta}^p}\asymp\norm{f_{\xi,k}}_{A_{(N-\beta)p+\alpha,N}^p}\asymp \sum_{l=0}^{N-1} |D^l f_{\xi,k}(0)| + \norm{D^Nf_{\xi,k}}_{A_{(N-\beta)p+\alpha}^p} \asymp \norm{D^Nf_{\xi,k}}_{A_{(N-\beta)p+\alpha}^p}.
\end{equation*}
The last equivalence follows from
\[
0\leq \abs{ D^l f_{\xi,k}(0) } \leq \abs{ D^N f_{\xi,k}(0) } \leq  \norm{D^N f_{\xi,k}}_{A_{(N-\beta)p+\alpha}^p}
\] 
for $l\leq N$. To finish the proof, it will be shown that for $\gamma>-1$ and $j\in\mathbb Z_{\geq 0}$ we have
\[
 \norm{D^j f_{\xi,k}}_{A_\gamma^ p}^p \asymp (k+1)^{jp-(\gamma+\frac{3}{2})} ,
\]
from which the lemma follows by letting $\gamma = (N-\beta)p + \alpha$ and $j=N$.

Let $q$ be the smallest integer greater than or equal to $p$ and $k\geq j$. We have
{
\allowdisplaybreaks
\begin{align*}
\frac{\norm{D^j f_{\xi,k}}_{A_\gamma^ p}^p}{\gamma+1}&=\int_{\mathbb D} |D^j f_{\xi,k} (z)|^p (1-|z|^2)^\gamma  dA(z) \\
&= \left(\frac{k!}{(k-j)!}\right)^p\int_{\mathbb D} \left(\frac{    \left|1+\conj{\xi} z\right|^{(k-j)}    }{2^{k}}\right)^p (1-|z|^2)^\gamma  dA(z) \\
&\stackrel{(*)}{\geq} \left(\frac{k!}{(k-j)!}\right)^p\int_{\mathbb D} \left(\frac{    \left|1+\conj{\xi} z\right|^{(k-j)}    }{2^{k}}\right)^q (1-|z|^2)^\gamma  dA(z)  \\
&\stackrel{(**)}{\geq} \left(\frac{k!}{(k-j)!}\right)^p\int_{\mathbb D} \frac{1}{     2^{jq}  }    \frac{     \left|1+\conj{\xi} z\right|^{2K}          }{    2^{2K}           } (1-|z|^2)^\gamma  dA(z)=U_{k,\xi,j,\gamma}. 
\end{align*}
}The $(*)$ indicates that choosing $q$ to be the greatest integer smaller than $p$ we similarily obtain the opposite strict inequality. The function $K\colon\mathbb Z_{\geq j}\to \mathbb Z_{\geq 0}$ is defined as $K=K_j(k)=\frac{(k-j)q}{2}$ if $k-j$ is even. In this case $\stackrel{(**)}{\geq}$ is an equality. If $k-j$ is odd, then $K$ is defined by $K=\frac{(k+1-j)q}{2}$ or $K=\frac{(k-1-j)q}{2}$ depending on which inequality we want to obtain. In the latter case $\stackrel{(**)}{\geq}$ is replaced by $\leq$. 

We continue the proof by evaluating the integral with respect to the angle. It is enough to examine the expression for $\xi = 1$. Now consider the functions $g_r\in L^2([0,2\pi)), \ g_r(t)=(1+re^{it})^K=\sum_{n=0}^K\binom{K}{n} r^n e^{itn}$ for $r\geq 0$. From Parseval's equality we obtain
\begin{align*}
\int_0^{2\pi} |1+re^{it}|^{2K} dt = 2\pi \sum_{n=0}^K \binom{K}{n}^2 r^{2n},
\end{align*}
for every $0\leq r<1$ and thus,
{
\allowdisplaybreaks
\begin{align*}
U_{k,\xi,j,\gamma} &=\left(\frac{k!}{(k-j)!}\right)^p \frac{2}{2^{(jq+2K)}} \int_0^1    \sum_{n=0}^K \binom{K}{n}^2 r^{2n} (1-r^2)^\gamma   rdr\\
&=\left(\frac{k!}{(k-j)!}\right)^p \frac{1}{2^{(jq+2K)}}   \sum_{n=0}^K \binom{K}{n}^2 \int_0^1 r^{2n} (1-r^2)^\gamma   2rdr\\
&=\left(\frac{k!}{(k-j)!}\right)^p \frac{1}{2^{(jq+2K)}}   \sum_{n=0}^K \binom{K}{n}^2 \int_0^1 r^n (1-r)^\gamma   dr.
\end{align*}
}Moreover,
{
\allowdisplaybreaks
\begin{align*}
U_{k,\xi,j,\gamma} &= \left(\frac{k!}{(k-j)!}\right)^p\frac{1}{2^{(jq+2K)}}   \sum_{n=0}^K \binom{K}{n}^2 \beta(n+1,\gamma+1)\\
&=\left(\frac{k!}{(k-j)!}\right)^p \frac{1}{2^{(jq+2K)}}   \sum_{n=0}^K \binom{K}{n}^2\frac{\Gamma(\gamma+1)\Gamma(n+1)}{\Gamma(n+\gamma+2)}\\
&=\left(\frac{\Gamma(k-j+1+j)}{\Gamma(k-j+1)}\right)^p  \frac{1}{2^{(jq+2K)}}  \frac{\Gamma(\gamma+1)}{\Gamma(K+\gamma+2)}    \frac{\Gamma(2K+\gamma+2)}{\Gamma(K+\gamma+2)}\\
&\sim k^{jp} \frac{ \Gamma(\gamma+1)}{2^{(jq+2K)}}  K^{-\gamma-\frac{3}{2}} \frac{2^{\gamma+1+2K}}{\sqrt{\pi }}\\
&\sim \Gamma(\gamma+1) k^{jp} (kq)^{-\gamma-\frac{3}{2}} \frac{2^{2\gamma+\frac{5}{2}-jq}}{\sqrt{\pi }} \\
&=\frac{ \Gamma(\gamma+1) 2^{2\gamma+\frac{5}{2}-jq}}{   \sqrt{ \pi }   q^{\gamma+\frac{3}{2}}         }k^{jp-(\gamma+\frac{3}{2})},
\end{align*}
}as $k\to\infty$, where the first asymptotic approximation is given by Lemma \ref{someAsympBehav} and $(k-c)^a\sim k^a$ as $k\to\infty$ for every $c\in\mathbb R$. The third equality follows from the Chu-Vandermonde identity, see \cite[p.~32]{K} with the parameters $n=K,b=-K$ and $c=\gamma+2$.

To prove that $(g_{\xi,k})_{k=1}^\infty$ is a weak null sequence, let $B_{A^p_{\alpha,\beta}(\mathbb D)}$ denote  the closed unit ball  of the Bergman-Sobolev space $A^p_{\alpha,\beta}(\mathbb D),\ p>1$. Let $\tau_p$ denote the topology of pointwise convergence. Notice that $(B_{A^p_{\alpha,\beta}(\mathbb D)},\tau_p)$ is a Hausdorff space and that $B_{A^p_{\alpha,\beta}(\mathbb D)}$ is weakly compact, since the space is reflexive. Since $\delta_z\in A^p_{\alpha,\beta}(\mathbb D)^*$ by condition \ref{finerThanCOTop}, the identity map 
\[
\id\colon(B_{A^p_{\alpha,\beta}(\mathbb D)},w)\to (B_{A^p_{\alpha,\beta}(\mathbb D)},\tau_p)
\]
is continuous, and hence,  it represents a homeomorphism between the spaces $(B_{A^p_{\alpha,\beta}(\mathbb D)},w)$ and $(B_{A^p_{\alpha,\beta}(\mathbb D)},\tau_p)$. Since $\id^{-1}\colon (B_{A^p_{\alpha,\beta}(\mathbb D)},\tau_p) \to (B_{A^p_{\alpha,\beta}(\mathbb D)},w)$ is continuous, we conclude that $g_{\xi, k}\to 0$ weakly, when $k\to\infty$.

\end{proof}

\begin{lem}\label{IfFredholm_BergmanSobolev}
Let $\alpha>-1$, $p>1$ or $\alpha=-1$, $p=2$ and assume $\beta > \frac{2+\alpha}{p}$. 
If $M_u\colon A^p_{\alpha,\beta} \to A^p_{\alpha,\beta}$ is Fredholm, then there exist $\delta >0$ and $r \in (0,1)$ such that $|u(z)| \ge \delta$ for all $r \le |z| < 1$. 
\end{lem}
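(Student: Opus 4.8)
The plan is to argue by contradiction, mirroring the structure of Lemma~\ref{IfFredholm_BlochType} but replacing the Bloch computation with the sharp Bergman--Sobolev asymptotics of Lemma~\ref{bra}. Suppose no such $\delta,r$ exist; then there is a sequence $(z_k)\subset\mathbb D$ with $|z_k|\to 1$ and $|u(z_k)|\to 0$, and after passing to a subsequence we may assume $z_k\to\xi\in\partial\mathbb D$. Since $\beta>\frac{2+\alpha}{p}$, we have $u\in M(A^p_{\alpha,\beta}(\mathbb D))=A^p_{\alpha,\beta}(\mathbb D)\subset A(\mathbb D)$, so $u$ is continuous on $\overline{\mathbb D}$ and $u(\xi)=0$. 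I will work with the normalized peak functions $g_{\xi,k}=f_{\xi,k}/\|f_{\xi,k}\|_{A^p_{\alpha,\beta}}$, which by Lemma~\ref{bra} satisfy $\|g_{\xi,k}\|_{A^p_{\alpha,\beta}}=1$ and $g_{\xi,k}\to 0$ weakly (as $p>1$ throughout). The goal is to show $\|M_u g_{\xi,k}\|_{A^p_{\alpha,\beta}}=\|u\,g_{\xi,k}\|_{A^p_{\alpha,\beta}}\to 0$: once this holds, a normalized weakly null sequence sent to a norm--null sequence forces $0\in\sigma_e(M_u)$ by \cite[Lemma~4.3.15]{D} (exactly as in Lemma~\ref{IfFredholm_BlochType}), so $M_u$ is not Fredholm, a contradiction.

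It suffices to prove $\|u f_{\xi,k}\|_{A^p_{\alpha,\beta}}/\|f_{\xi,k}\|_{A^p_{\alpha,\beta}}\to 0$. With $\gamma=(N-\beta)p+\alpha$ and $N$ as in Lemma~\ref{bra}, I will use the norm equivalence coming from (\ref{formimp}) and (\ref{good}), namely $\|h\|_{A^p_{\alpha,\beta}}\asymp\sum_{j<N}|D^j h(0)|+\|D^N h\|_{A^p_\gamma}$, together with the Leibniz rule $D^N(u f_{\xi,k})=\sum_{j=0}^N\binom{N}{j}(D^j u)(D^{N-j}f_{\xi,k})$, estimating each term after dividing by $\|f_{\xi,k}\|^p_{A^p_{\alpha,\beta}}\asymp(k+1)^{\beta p-\alpha-\frac32}$. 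The value--at--$0$ terms are negligible because $f_{\xi,k}(0)=2^{-k}$ decays geometrically while the denominator grows polynomially. For $j=N$, the bound $|f_{\xi,k}|\le\|f_{\xi,k}\|_\infty=1$ gives $\|(D^N u)f_{\xi,k}\|_{A^p_\gamma}\le\|D^N u\|_{A^p_\gamma}$, whose ratio to $\|f_{\xi,k}\|^p\to\infty$ tends to $0$. For $j=0$, I split $\mathbb D=A_\delta\cup B_\delta$: on $A_\delta$ the factor $D^N f_{\xi,k}$ decays geometrically, while on $B_\delta$ the continuity $u(\xi)=0$ lets me choose $\delta$ with $|u|<\varepsilon$, so this contribution is $\le\varepsilon^p\|D^N f_{\xi,k}\|^p_{A^p_\gamma}\asymp\varepsilon^p\|f_{\xi,k}\|^p$, a ratio $\lesssim\varepsilon^p$.

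The main obstacle is the band of intermediate terms $1\le j\le N-1$, where $D^{N-j}f_{\xi,k}$ still peaks at $\xi$ yet $D^j u$ need not be bounded (this occurs precisely when $\beta-j\le\frac{2+\alpha}{p}$, which the choice $N>\beta-\frac{1/2+\alpha}{p}$ does not exclude). Here localization fails, and instead I will use the standard pointwise growth estimate for the Bergman--Sobolev space $A^p_{\alpha,\beta-j}$ containing $D^j u$, namely $|D^j u(z)|\lesssim(1-|z|^2)^{\min\{0,\,(\beta-j)-(2+\alpha)/p\}}$. Absorbing this weight into the integral turns it into a weighted Bergman norm $\|D^{N-j}f_{\xi,k}\|_{A^p_{\gamma'}}$ with shifted parameter $\gamma'=\gamma+p(\beta-j)-(2+\alpha)=(N-j)p-2>-1$ (valid since $p>1$ and $N-j\ge 1$); the sharp asymptotics of Lemma~\ref{bra} applied in this weight then yield the uniform bound $\|D^{N-j}f_{\xi,k}\|^p_{A^p_{\gamma'}}\asymp(k+1)^{1/2}$, independent of $j$. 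Dividing by $\|f_{\xi,k}\|^p\asymp(k+1)^{\beta p-\alpha-3/2}$ gives a ratio $\asymp(k+1)^{(2+\alpha)-\beta p}\to 0$; in the complementary range $\beta-j>\frac{2+\alpha}{p}$, where $D^j u$ is bounded, the ratio is instead $\asymp(k+1)^{-jp}\to 0$. Summing the finitely many contributions yields $\|M_u g_{\xi,k}\|\to 0$ and hence the contradiction. It is exactly at this step that a sharp two--sided asymptotic for $\|f_{\xi,k}\|$ (rather than a one--sided lower bound such as Lemma~11 in \cite{CHZ}) is indispensable, since the whole argument rests on the competing polynomial rates in $k$ cancelling with the correct exponents.
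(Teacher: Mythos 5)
Your overall strategy is exactly the paper's: contradiction via a boundary zero $u(\xi)=0$, the normalized peak functions $g_{\xi,k}$ of Lemma \ref{bra}, the weak-null plus norm-null combination fed into Lemma 4.3.15 of \cite{D}, then the norm equivalence reducing everything to an $N$-th derivative weighted Bergman norm, a Leibniz expansion, localization ($A_\delta\cup B_\delta$) for the term where no derivative falls on $u$, and the sharp asymptotics of Lemma \ref{bra} against the denominator $\|f_{\xi,k}\|^p\asymp(k+1)^{\beta p-\alpha-3/2}$ for the rest. The differences are organizational: you work with $D$ via (\ref{good}) where the paper works with $R$ via (\ref{BergSobNormEquivRel}) (immaterial for $n=1$), and you treat all intermediate terms $1\le j\le N-1$ uniformly through pointwise growth estimates, whereas the paper kills every term with $j\ge 2$ derivatives on $u$ by a cruder argument (integrability of $R^ju$ plus $\|f_{\xi,k}\|_\infty\le1$ plus a negative prefactor exponent) and reserves the growth-estimate analysis for $j=1$ alone. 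Your unified treatment is arguably cleaner, at the price of needing the growth estimate for every intermediate $j$ (the paper cites Lemma 5.4 of \cite{BB} only for first derivatives; iterating it is standard but should be said).

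There is, however, one step that fails as written: the estimate $|D^j u(z)|\lesssim(1-|z|^2)^{\min\{0,\,(\beta-j)-(2+\alpha)/p\}}$ is false in the borderline case $\beta-j=\frac{2+\alpha}{p}$, which occurs precisely when $\beta-\frac{2+\alpha}{p}$ is an integer in $[1,N-1]$. In that case your formula asserts that $D^ju$ is bounded, but $D^ju$ then lies in a critical Sobolev space (e.g.\ for $p=2$, $\alpha=0$, $\beta=2$, $j=1$ one gets $Du\in H^2_{1/2}(\D)$), which contains unbounded functions; only a logarithmic bound holds. This is exactly the pitfall the paper's case analysis is built to avoid: for $j=1$ with $\beta\ge\frac{2+\alpha}{p}+1$ it replaces the bound by $C\|u\|(1-|z|^2)^{-r}$ for arbitrarily small $r>0$. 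The same repair works in your scheme: using $(1-|z|^2)^{-r}$ with $0<r<j$ shifts your weight to $\gamma'+rp$... more precisely it costs a factor $(k+1)^{rp}$ in the numerator, and since your non-borderline ratios are $(k+1)^{2+\alpha-\beta p}$ and $(k+1)^{-jp}$ with strictly negative exponents, choosing $r$ small enough (e.g.\ $r<\min\{j,\,(\beta p-2-\alpha)/p\}$) preserves the convergence to zero. With that local patch your argument is complete and delivers the lemma.
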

\begin{proof}
The proof will be carried out by contraposition. Since $u$ belongs to the disk algebra  it is  continuous  up to the boundary of $\mathbb D.$ Assume there is a point $\xi\in \partial \mathbb D$ such that $u(\xi)=0$. This assumption is equivalent to $u$ not being bounded from below arbitrarily close to the boundary, since $u$ is continuous. It will be shown that 
\[
\norm{ug_{\xi,k}}_{A^p_{p(N-\beta)+\alpha,N}}\to 0  \ \text{ as} \ k\rightarrow \infty,
\] which by (\ref{formimp}) implies that
\begin{equation}\label{ugNullSeq}
\norm{ug_{\xi,k}}_{A^p_{\alpha,\beta}}\to 0  \ \text{ as} \ k\rightarrow \infty,
\end{equation}
 where $N$ is the positive integer satisfying
\[
0<N-\beta + \frac{\frac{1}{2}+\alpha}{p}\leq 1,
\]
and $g_{\xi,k}$ is the function defined in Lemma \ref{bra}. The lemma follows from Lemma \ref{bra}, (\ref{ugNullSeq}) and Lemma 4.3.15 in \cite{D}.

To prove the null sequence statement, we will make use of (\ref{BergSobNormEquivRel}). First, notice that by Lemma \ref{bra} we obtain
\begin{align*}
\abs{u(0)g_{\xi,k}(0)}\lesssim \frac{   \abs{u(0)f_{\xi,k}(0)}   }{(k+1)^{-\frac{\alpha}{p}+\beta-\frac{3}{2p}}   } \to 0
\end{align*}
as $k\to \infty$. Using the general Leibniz formula we have
\[
R^N(u g_{\xi,k}) = \sum_{j=0}^N  \binom{N}{j} R^j u R^{N-j}  g_{\xi,k},
\]
from which it follows that 
\begin{equation}
\label{eq: leibniz}
\norm{R^N(u g_{\xi,k}) }_{A^p_{p(N-\beta)+\alpha}} \le \sum_{j=0}^N  \binom{N}{j}\norm{ R^j u R^{N-j}  g_{\xi,k} }_{A^p_{p(N-\beta)+\alpha}}.
\end{equation}
Therefore, it suffices to show that 
\[
I_{k,j} = \int_\mathbb D |R^j u R^{N-j} g_{\xi,k}|^p dA_{p(N-\beta)+\alpha}
\]
approaches zero for $j=0,1,...,N$ as $k$ tends to infinity. To prove the assertion for the case $j=0$, we take $\varepsilon>0$ and choose $\delta>0$ such that $|u(z)|^p<\varepsilon$ for all 
\[
z\in B_\delta= \{z\in\D: |z -\xi| < \delta\}.
\] 
We can now choose a $K>0$ such that
\[
\int_{A_\delta} |R^N g_{\xi,k}(z) | ^p dA_{p(N-\beta)+\alpha} (z)  < \varepsilon,
\]
which implies
\[
\int_{A_\delta} |u(z)R^N g_{\xi,k}(z) | ^p dA_{p(N-\beta)+\alpha} (z)  < \norm{u}_\infty^p \varepsilon
\]
for $k>K$, where Lemma \ref{bra} has been used and $A_\delta= \{z\in\D: |z -\xi| \geq \delta\}$. Thus, for $k > K$
\[
I_{k,0} <( \norm{u}^p_{\infty} + \norm{R^N g_{\xi,k}}_{A^p_{p(N-\beta)+\alpha} }^p)\varepsilon \leq ( \norm{u}^p_{\infty} + M\norm{g_{\xi,k}}_{A^p_{\alpha,\beta} }^p)\varepsilon
\]
where (\ref{formimp}) gives the second inequality for some $M>0$. Since $u\in A_{\alpha,\beta}^p(\mathbb D)\subset H^\infty(\mathbb D)$ and $\|g_{\xi,k}\|_{A^p_{\alpha, \beta}}=1$ 
for every $k$, the result follows.
To assure the result in the case $j\geq 1$, we will use the following approximation:

\begin{align*}
I_{k,j}&\leq \frac{(k+1)^{p(N-j)}}{\norm{f_{\xi,k}}^p_{A_{\alpha,\beta}^p}} \int_\mathbb D \abs{R^j u(z)}^p \left|\frac{1+\conj{\xi}z}{2}\right|^{(k-(N-j))p}  dA_{p(N-\beta)+\alpha}(z).
\end{align*}
From Lemma \ref{bra} it follows that
\begin{align}
\begin{split}
\label{IntegralIApproxMAIN}
I_{k,j}&\lesssim \frac{    (k+1)^{p(N-j)}    }{      ( k+1)^{   -\alpha + \beta p  -\frac{3}{2} }       } \int_\mathbb D |R^j u(z)|^p \left|\frac{1+\conj{\xi}z}{2}\right|^{(k-(N-j))p} (1-|z|^2)^{p(N-\beta)+\alpha} dA(z)\\
&=  (k+1)^{p(N-\beta+\frac{\alpha +\frac{3}{2}   }{ p}-j)}       \int_\mathbb D |R^j u(z)|^p \left|\frac{1+\conj{\xi}z}{2}\right|^{(k-(N-j))p} (1-|z|^2)^{p(N-\beta)+\alpha} dA(z).
\end{split}
\end{align}

For integers $j\in[2,N]$ the result $I_{k,j}\to 0$ as $k\to\infty$ is obtained from the following three facts:

\begin{align*}
&u\in A^p_{\alpha,\beta} \simeq A^p_{p(N-\beta)+\alpha,N}\subset A^p_{p(N-\beta)+\alpha,j}; \\
&\norm{f_{\xi,k} }_\infty \leq 1 \ \forall k\in\mathbb Z_{\geq 1}; \\
&p(N-\beta+\frac{ \frac{3}{2} + \alpha}{p}-j)\leq 1+p-pj<0.
\end{align*}

For $j=1$ we make an additional partition. We will, once at a time, assume that $N-\beta+\frac{\frac{3}{2} +  \alpha }{p}-1$ is stricly less than zero, equal to zero or strictly larger than zero. In the first case we can apply the procedure used for $j\geq 2$. In the second case we may utilize the Lebesgue dominated convergence theorem to functions 
\[
\left|\frac{1+\conj{\xi}z}{2}\right|^{(k-(N-j))p} \le 1
\] 
for all $z \in \D$ and $k \in \mathbb Z_{\geq N}$ to obtain the result. 

The only thing that remains to show is that $I_{k,1}\to 0$ as $k\to \infty$ when $N-\beta+\frac{\frac{3}{2} + \alpha }{p}-1>0$. This condition implies that  
\[
N >\beta - \frac{\frac{3}{2} +  \alpha  }{p}+1 >1, 
\]
 so that $N\geq 2$. 

To prove that $(I_{k,1})_{k=1}^\infty$ is a null sequence we will use Lemma 5.4 in \cite{BB} and Lemma \ref{bra}. Lemma 5.4 in \cite{BB} gives us three different approximations for the behaviour of $|Du(z)|$, depending on values of some parameters. Hence, it suffices to prove the null convergence for all of these approximations, one at a time. Notice that $q=\alpha+1$ when comparing notations with \cite{BB}. First, assume $\beta<\frac{2+\alpha}{p}+ 1$. Then we have
{
\allowdisplaybreaks
\begin{align*}
I_{k,1} &\lesssim \norm{u}_{A^p_{\alpha,\beta}}^p\int_{\D} |R^{N-1} g_{\xi,k}|^p (1-|z|^2)^{-p(\frac{2+\alpha}{p}+1-\beta)}dA_{p(N-\beta)+\alpha}(z)\\
&\lesssim\frac{\norm{u}_{A^p_{\alpha,\beta}}^p}{\norm{f_{\xi,k}}_{A^p_{\alpha,\beta}}^p}\int_{\D} |R^{N-1} f_{\xi,k}|^p dA_{p(N-1)-2}(z)\\
&=\frac{\norm{u}_{A^p_{\alpha,\beta}}^p}{\norm{f_{\xi,k}}_{A^p_{\alpha,\beta}}^p}\norm{f_{\xi,k}}^p_ {A^p_{p(N-1)-2,N-1}}\\
&\asymp\norm{u}_{A^p_{\alpha,\beta}}^p(k+1)^{2  +\alpha-\beta p   },
\end{align*}
}therefore $(I_{k,1})_k$ is a null sequence in this case. 
If $\beta\geq \frac{2+\alpha}{p}+ 1$, then a worse upper bound than the one stated in Lemma 5.4 is given by $C   \norm{u}_{A^p_{\alpha,\beta}}^p \frac{        1       }{(1-|z|^2)^r}$ for some positive constant $C$ and any $r>0$. In this case we have, for $0<r<\frac{1}{2}$, that
\begin{align*}
I_{k,1} &\lesssim \norm{u}_{A^p_{\alpha,\beta}}^p\int_{\D} |R^{N-1} g_{\xi,k}|^p (1-|z|^2)^{-r}dA_{p(N-\beta)+\alpha}(z)\\
&\lesssim\frac{\norm{u}_{A^p_{\alpha,\beta}}^p}{\norm{f_{\xi,k}}_{A^p_{\alpha,\beta}}^p}\int_{\D} |R^{N-1} f_{\xi,k}|^p dA_{p(N-\beta)+\alpha-r}(z)\\
&=\frac{\norm{u}_{A^p_{\alpha,\beta}}^p}{\norm{f_{\xi,k}}_{A^p_{\alpha,\beta}}^p}\norm{f_{\xi,k}}^p_ {A^p_{p(N-\beta)+\alpha-r,N-1}}\\
&\asymp\norm{u}_{A^p_{\alpha,\beta}}^p(k+1)^{r-p},
\end{align*}
which completes the proof.

\end{proof}

We are now ready to present the main result.

\begin{thm}\label{MainResEssSpec}  Let  $X(\mathbb D)$  be any of the following spaces:
\begin{enumerate}[label=\upshape(\alph*)]
\itemsep-0.6em 
\item $\B_\alpha(\mathbb D), \, 0 <\alpha  <  1,$ with $u\in M(\B_\alpha(\D)) =\B_\alpha(\D) \subset  A(\mathbb D)$;\\
\item $\B(\mathbb D)$  with  $u\in M(\B(\D))\cap  A(\mathbb D)$;\\
\item $A^p_{\alpha,\beta}(\mathbb D)$ with  $u\in M(A^p_{\alpha,\beta}(\mathbb D)) =A^p_{\alpha,\beta}(\mathbb D)\subset  A(\mathbb D)$, where $p>1, \, \alpha>-1\text{ and } \beta > \frac{2+\alpha}{p}$;\\
\item $H^2_\beta(\mathbb D)$ with  $u\in M(H^2_\beta(\mathbb D)) =H^2_\beta(\mathbb D)\subset  A(\mathbb D),$ where $\beta > \frac{1}{2}$.
\end{enumerate}
\medskip
\noindent Then the essential spectrum of  $M_u:X(\mathbb D) \to X(\mathbb D)$  is given by 
\[
\sigma_e(M_u) = \bigcap_{0 < r < 1} \closed{u(\mathbb D \setminus r\mathbb D)}=u(\partial \mathbb D).
\]
\end{thm}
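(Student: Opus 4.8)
The plan is to establish the two displayed equalities separately. The first equality, $\sigma_e(M_u) = \bigcap_{0<r<1}\closed{u(\mathbb D\setminus r\mathbb D)}$, will be obtained exactly along the lines of the proof of Theorem \ref{EssSpecMlutiHInfty}, by characterizing Fredholmness of $M_u-\lambda I = M_{u-\lambda}$ through the condition that $u-\lambda$ be bounded away from zero near $\partial\mathbb D$; the only structural change from that theorem is that here the necessity of this condition comes not from Lemma \ref{IfFredholm_ForX} (the $H^\infty$ case) but from the space-specific lemmas valid when the multipliers lie in the disk algebra. The second equality, $\bigcap_{0<r<1}\closed{u(\mathbb D\setminus r\mathbb D)} = u(\partial\mathbb D)$, is a purely topological consequence of the continuity of $u$ up to the boundary, which is available in every case since each admissible $X(\mathbb D)$ forces $u\in A(\mathbb D)$.

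For the first equality, fix $\lambda\in\mathbb C$ and record the elementary equivalence that $\lambda\notin\bigcap_{0<r<1}\closed{u(\mathbb D\setminus r\mathbb D)}$ holds if and only if there exist $r\in(0,1)$ and $\delta>0$ with $|u(z)-\lambda|\ge\delta$ for all $r\le|z|<1$; indeed, membership in the intersection is precisely the statement that for every $r$ some sequence in $\mathbb D\setminus r\mathbb D$ has $u$-values tending to $\lambda$. It then suffices to show that this bounded-below condition for $u-\lambda$ is equivalent to Fredholmness of $M_{u-\lambda}$. First I would observe that in each of the cases (a)-(d) the function $u-\lambda$ inherits the structural hypotheses of $u$: constants belong to $\B_\alpha(\D)$, to $A^p_{\alpha,\beta}(\D)$ and to $A(\mathbb D)$, so $u-\lambda$ again lies in the relevant multiplier space and, where needed, in $A(\mathbb D)$. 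The implication ``bounded below $\Rightarrow$ Fredholm'' is supplied by Lemma \ref{ThenFredholm_ForX}, whose hypotheses \ref{finerThanCOTop}-\ref{divideByPolynom} were verified for all these spaces in the \textbf{Examples}. The converse implication ``Fredholm $\Rightarrow$ bounded below'' is the space-specific necessity, furnished by Lemma \ref{IfFredholm_BlochType} in cases (a) and (b) and by Lemma \ref{IfFredholm_BergmanSobolev} in cases (c) and (d); here case (d) is absorbed into the Bergman-Sobolev framework by writing $H^2_\beta(\mathbb D)=A^2_{-1,\beta}(\mathbb D)$ and noting that $\beta>\tfrac12=\tfrac{2+\alpha}{p}$ is exactly the admissible endpoint $\alpha=-1,\ p=2$ of Lemma \ref{IfFredholm_BergmanSobolev}. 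Combining the two implications with the elementary equivalence gives $\lambda\notin\bigcap_{0<r<1}\closed{u(\mathbb D\setminus r\mathbb D)}$ if and only if $M_{u-\lambda}$ is Fredholm, that is, if and only if $\lambda\notin\sigma_e(M_u)$.

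For the second equality I would use only that $u$ extends continuously to $\closed{\mathbb D}$. The inclusion $u(\partial\mathbb D)\subset\bigcap_{0<r<1}\closed{u(\mathbb D\setminus r\mathbb D)}$ is immediate, since any boundary point $\xi$ is approached from within $\mathbb D\setminus r\mathbb D$ for every $r$, whence $u(\xi)\in\closed{u(\mathbb D\setminus r\mathbb D)}$. For the reverse inclusion, given $\lambda$ in the intersection, for each $n$ I would choose $z_n$ with $|z_n|\ge 1-\tfrac1n$ and $|u(z_n)-\lambda|<\tfrac1n$; then $|z_n|\to1$ and $u(z_n)\to\lambda$, and a subsequence $z_{n_k}\to\xi\in\partial\mathbb D$, extracted by compactness of $\partial\mathbb D$, yields $\lambda=\lim_k u(z_{n_k})=u(\xi)\in u(\partial\mathbb D)$ by continuity.

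The conceptual difficulty has in fact already been discharged in the preparatory lemmas, above all in the sharp peak-function norm asymptotics of Lemma \ref{bra} that drive the necessity direction for $A^p_{\alpha,\beta}(\mathbb D)$. Consequently I expect the only genuine care in assembling the theorem to lie in verifying that the structural hypotheses (membership in the multiplier space and in $A(\mathbb D)$, together with conditions \ref{finerThanCOTop}-\ref{divideByPolynom}) persist after translating $u$ by $\lambda$, and in correctly matching each space to its necessity lemma; the single subtlety there is the identification of $H^2_\beta(\mathbb D)$ with the endpoint Bergman-Sobolev space $A^2_{-1,\beta}(\mathbb D)$, after which the boundary passage for the second equality is routine.
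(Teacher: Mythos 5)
Your proposal is correct and follows essentially the same route as the paper: the first equality is obtained by running the argument of Theorem \ref{EssSpecMlutiHInfty} with Lemma \ref{ThenFredholm_ForX} for sufficiency and the space-specific Lemmas \ref{IfFredholm_BlochType} and \ref{IfFredholm_BergmanSobolev} (with $H^2_\beta(\mathbb D)=A^2_{-1,\beta}(\mathbb D)$ at the endpoint $\alpha=-1$, $p=2$) for necessity, and the second equality by the same continuity-plus-compactness boundary argument the paper uses. The only cosmetic difference is that you extract approximate preimages directly from the closure, whereas the paper first identifies $\closed{u(\mathbb D\setminus r\mathbb D)}$ with $u(\closed{\mathbb D}\setminus r\mathbb D)$ and then takes exact preimages; both are equivalent.
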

\begin{proof}
As in the proof of Theorem \ref{EssSpecMlutiHInfty},  now using lemmas \ref{ThenFredholm_ForX}, \ref{IfFredholm_BlochType}  and \ref{IfFredholm_BergmanSobolev},  we obtain $\sigma_e(M_u) = \bigcap_{0 < r < 1} \closed{u(\mathbb D \setminus r\mathbb D)}$ whenever $u\in M(X(\mathbb D))\cap A(\mathbb D)$ and $X(\mathbb D)$ is any of the spaces listed above. 
To prove the last equality, we utilize the continuity of $u$ on $\overline{\D}$, which implies the first equality below 
\begin{align*}
\bigcap_{0 < r < 1} \closed{u(\mathbb D \setminus r\mathbb D)}&=\bigcap_{0 < r < 1} u(\closed{\mathbb D \setminus r\mathbb D})\\
&=\bigcap_{0 < r < 1} u(\closed{\mathbb D} \setminus r\mathbb D)\\
&\supset u(\partial \mathbb D).
\end{align*}
To show the opposite inclusion, take $z\in\bigcap_{0 < r < 1} u(\closed{\mathbb D} \setminus r\mathbb D)$. Now there is a sequence $(y_n)_{n=1}^\infty, \ 1-\frac{1}{n}\leq|y_n|\leq 1$ such that $u(y_n) = z$. Since $(y_n)_{n=1}^\infty$ is bounded there is a convergent subsequence $(y_{n_k})_{k=1}^\infty$ such that $y_{n_k}\to y\in \partial \mathbb D$ as $k\to \infty$. Since $u$ is continuous on
$\closed{\mathbb D}$ we have
\[
z=\lim_{k\to\infty} u(y_{n_k}) = u(y), 
\]
so $z\in u(\partial \mathbb D)$, which proves the theorem.
\end{proof}

\bigskip

\subsection*{Acknowledgements}

The first two authors were supported in part by the Academy of Finland project 296718. The third author acknowledges support from the Magnus Ehrnrooth Foundation.

\bigskip

\end{document}